\newtheorem{thm}{Theorem}[section]
\newtheorem{pro}[thm]{Proposition}
\newtheorem{cor}[thm]{Corollary}
\DeclareMathOperator{\Hom}{Hom}
\DeclareMathOperator{\End}{End}
\DeclareMathOperator{\ext}{Ext}
\DeclareMathOperator{\ann}{Ann}
\DeclareMathOperator{\jac}{Jac}
\DeclareMathOperator{\soc}{Soc}
\DeclareMathOperator{\der}{Der}
\DeclareMathOperator{\gr}{gr}
\newcommand{\D}{\mathbb{D}}
\newcommand{\F}{\mathbb{F}}
\newcommand{\tS}{\overline{S}}
\newcommand{\caO}{\mathcal{O}}
\newcommand{\caF}{\mathcal{F}}
\newcommand{\argu}{\hbox to 7truept{\hrulefill}}
\begin{document}


\title[Split abelian chief factors and first degree cohomology]{Split abelian chief
factors and first degree cohomology for Lie algebras}

\author{J\"org Feldvoss}
\address{Department of Mathematics and Statistics, University of South Alabama,
Mobile, AL 36688--0002, USA}
\email{jfeldvoss@southalabama.edu}

\author{Salvatore Siciliano}
\address{Dipartimento di Matematica e Fisica ``Ennio de Giorgi", Universit\`a del Salento,
Via Provinciale Lecce-Arnesano, I-73100 Lecce, Italy}
\email{salvatore.siciliano@unisalento.it}

\author{Thomas Weigel}
\address{Dipartimento di Matematica e Applicazioni, Universit\`a degli Studi di
Milano-Bicocca, Via Roberto Cozzi, No.\ 53, I-20125 Milano, Italy}
\email{thomas.weigel@unimib.it}

\dedicatory{Dedicated to Helmut Strade on the occasion of his seventieth birthday}

\subjclass[2000]{17B05, 17B30, 17B50, 17B55, 17B56}

\keywords{Solvable Lie algebra, irreducible module, split abelian chief factor,
cohomology, restricted cohomology, Loewy layer, projective indecomposable
module, principal block}


\begin{abstract}
In this paper we investigate the relation between the multiplicities of split abelian
chief factors of finite-dimensional Lie algebras and first degree cohomology. In
particular, we obtain a characterization of modular solvable Lie algebras in
terms of the vanishing of first degree cohomology or in terms of the multiplicities
of split abelian chief factors. The analogues of these results are well known in
the modular representation theory of finite groups. An important tool in the proof
of these results is a refinement of a non-vanishing theorem of Seligman for the
first degree cohomology of non-solvable finite-dimensional Lie algebras in prime
characteristic. As applications we derive several results in the representation
theory of restricted Lie algebras related to the principal block and the projective
cover of the trivial irreducible module of a finite-dimensional restricted Lie algebra.
In particular, we obtain a characterization of solvable restricted Lie algebras
in terms of the second Loewy layer of the projective cover of the trivial irreducible
module.
\end{abstract}

      
\date{January 21, 2013}
          
\maketitle


\section{Introduction} 


W.\ Gasch\"utz proved the ``only if"-part of the following cohomological vanishing
theorem for finite $p$-solvable groups (see \cite[Lemma 1]{S1}), and the converse
is due to U.\ Stammbach \cite[Theorem A]{S1}. Here and in the following $p$ is an
arbitrary prime number, and $G$ is a finite group whose order is divisible by $p$.
Moreover, let $\F_p[G]$ denote the group algebra of $G$ over the field $\F_p$ with
$p$ elements, and let $C_G (M):=\{g\in G\mid g\cdot m=m\mbox{ for every }m\in M\}$
denote the {\em centralizer\/} of a (unital left) $\F_p[G]$-module $M$ in $G$.

\begin{thm}\label{gaschuetz}
A finite group $G$ is $p$-solvable if, and only if, $H^1(G/C_G(S),S)=0$ for every
irreducible $\F_p[G]$-module $S$.
\end{thm}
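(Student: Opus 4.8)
The plan is to prove the two implications separately: the ``only if'' part by Gasch\"utz's original argument, and the converse by a minimal counterexample reduction to a non-abelian simple section in the spirit of Stammbach. For the ``only if'' part, assume $G$ is $p$-solvable and let $S$ be an irreducible $\F_p[G]$-module; put $\bar G:=G/C_G(S)$, which is again $p$-solvable and now acts faithfully and irreducibly on $S$. If $\bar G=1$ there is nothing to prove, so assume $\bar G\neq 1$. First I would check that $O_p(\bar G)=1$: the fixed points $S^{O_p(\bar G)}$ form a nonzero $\bar G$-submodule of $S$, hence all of $S$ by irreducibility, so the normal $p$-subgroup $O_p(\bar G)$ lies in $C_{\bar G}(S)=1$. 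Since a nontrivial $p$-solvable group has a nontrivial $p$-core or $p'$-core, $N:=O_{p'}(\bar G)\neq 1$, and the same argument forces $S^N=0$. Now apply the inflation--restriction exact sequence for $N\trianglelefteq\bar G$: as $|N|$ is prime to $p$ the group algebra $\F_p[N]$ is semisimple, so $H^1(N,S)=0$; together with $H^0(N,S)=S^N=0$ this gives $H^1(\bar G,S)=0$, as desired.

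For the converse I would argue by contradiction: let $G$ be of least order among finite groups that are not $p$-solvable but satisfy $(\ast)$: $H^1(G/C_G(S),S)=0$ for every irreducible $\F_p[G]$-module $S$. Since inflating an irreducible $\F_p[G/N]$-module to $G$ leaves the relevant centralizer quotient unchanged, $(\ast)$ is inherited by quotients; by minimality, every proper quotient of $G$ is therefore $p$-solvable. Pick a minimal normal subgroup $M$ of $G$. As $G/M$ is $p$-solvable but $G$ is not, $M$ is not $p$-solvable, and being characteristically simple it is a direct power $T^k$ of a non-abelian simple (hence perfect) group $T$ with $p\mid|T|$. The cohomological input about $T$ is elementary and uses no classification: since $T$ is perfect and $p\mid|T|$, the projective cover $Q$ of the trivial $\F_p[T]$-module has $\F_p$-dimension divisible by the $p$-part of $|T|$, so $Q\neq\F_p$ and the second Loewy layer $\mathrm{rad}(Q)/\mathrm{rad}^2(Q)$ is nonzero; each of its composition factors $S_0$ is nontrivial (as $T$ is perfect) and satisfies $\ext^1_{\F_p[T]}(\F_p,S_0)\cong H^1(T,S_0)\neq0$.

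It remains to propagate this upward and organize the endgame. Inflating $S_0$ along the first projection $M=T^k\twoheadrightarrow T$ yields an irreducible, nontrivial $\F_p[M]$-module $W$ with $H^1(M,W)\cong H^1(T,S_0)\neq 0$, by the K\"unneth formula and perfectness of the remaining factors; Shapiro's lemma then gives $H^1(G,\mathrm{Ind}_M^G W)\cong H^1(M,W)\neq 0$, and by the long exact cohomology sequence some composition factor $S$ of $\mathrm{Ind}_M^G W$ has $H^1(G,S)\neq0$. By the Mackey decomposition the restriction of $\mathrm{Ind}_M^G W$ to $M\trianglelefteq G$ is a sum of $G$-conjugates of $W$, all nontrivial $M$-modules, so $S$ is nontrivial on $M$; since $C_G(S)\cap M\trianglelefteq G$ and $M$ is minimal normal, this forces $C_G(S)\cap M=1$, hence $M$ embeds into $G/C_G(S)$ and $G/C_G(S)$ is not $p$-solvable. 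If $C_G(S)\neq1$ this is a non-$p$-solvable proper quotient of $G$, contradicting the previous paragraph; and if $C_G(S)=1$ then $H^1(G/C_G(S),S)=H^1(G,S)\neq0$, contradicting $(\ast)$. Either way we reach a contradiction, which proves the converse.

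The main obstacle I anticipate is the converse, and within it the bookkeeping around the centralizer: one must build the test module so that \emph{every} composition factor of $\mathrm{Ind}_M^G W$ is non-trivial on the minimal normal subgroup $M$ (this is the role of the Mackey step), and then split according to whether $C_G(S)$ is trivial, invoking in one case the hypothesis $(\ast)$ and in the other the minimality of $G$. By comparison, the cohomological heart---that a non-abelian simple group whose order is divisible by $p$ carries an irreducible module with non-vanishing first cohomology---falls out cleanly from the structure of the second Loewy layer of the projective cover of the trivial module, with no appeal to the classification of finite simple groups.
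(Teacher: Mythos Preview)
The paper does not give a proof of this theorem at all: Theorem~\ref{gaschuetz} is stated in the introduction as a known result, with the ``only if'' part attributed to Gasch\"utz and the converse to Stammbach (both via \cite{S1}). So there is no in-paper proof to compare against. Your argument is, however, correct and is essentially the Gasch\"utz/Stammbach proof: the ``only if'' direction via $O_p(\bar G)=1$, a nontrivial $p'$-core, and inflation--restriction is exactly the classical line; and your minimal-counterexample converse --- producing a nontrivial irreducible $T$-module $S_0$ with $H^1(T,S_0)\neq0$ from the second Loewy layer of the projective cover of $\F_p$, inflating to $M=T^k$ via K\"unneth, passing through Shapiro for $\mathrm{Ind}_M^G$, and then using Mackey plus minimality of $M$ to force $C_G(S)\cap M=1$ --- is sound in every step.

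It is worth noting that your converse argument is precisely the template the paper adapts for its own contribution, the Lie-algebraic analogue Theorem~\ref{seligman}: the paper also takes a minimal counterexample, shows proper quotients are solvable and there is a unique minimal ideal $\soc(L)$, invokes Seligman's non-vanishing at the simple bottom, uses a codimension-one ideal together with Shapiro's lemma for truncated coinduced modules (in place of your $\mathrm{Ind}_M^G$), extracts a composition factor $X$ with $H^1(L,X)\neq0$, and uses Proposition~\ref{clifford} (the analogue of your Mackey step) to force $\ann_L(X)=0$. The paper explicitly remarks that its approach ``is similar to Stammbach's proof of the `if'-part of Theorem~\ref{gaschuetz}''; your write-up would serve as a faithful reconstruction of that group-theoretic prototype.
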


Let $S$ be an irreducible $\F_p[G]$-module. Then $[G:S]_{\rm split}$ denotes the
number of {\em $p$-elementary abelian chief factors\/} $G_j/G_{j-1}$ ($1\le j\le
n$) of a given chief series $\{1\}=G_0\subset G_1\subset\cdots\subset G_n=G$ that
are isomorphic to $S$ as $\F_p[G]$-modules and for which the exact sequence $\{1\}
\to G_j/G_{j-1}\to G/G_{j-1}\to G/G_j\to\{1\}$ splits in the category of groups.
According to the main result of \cite{S2}, $[G:S]_{\rm split}$ is independent of
the choice of the chief series of $G$. 

W.\ Gasch\"utz also proved the ``only if"-part of the following result on split (or
complementable) $p$-chief factors of finite $p$-solvable groups (see \cite[Theorem
VII.15.5]{HB}). The converse of Gasch\"utz' theorem is due to U.\ Stammbach
\cite[Corollary 1]{S2}), but in an equivalent form it was already proved earlier
by W.\ Willems \cite[Theorem 3.9]{W}.

\begin{thm}\label{stammbach}
A finite group $G$ is $p$-solvable if, and only if,  $\dim_{\F_p}H^1(G,S)=\dim_{\F_p}
\End_{\F_p[G]}(S)\cdot [G:S]_{\rm split}$ holds for every irreducible $\F_p[G]$-module
$S$.
\end{thm}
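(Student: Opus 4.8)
The plan is to prove both implications, using Theorem \ref{gaschuetz} for the converse. For the forward implication, suppose $G$ is $p$-solvable and argue by induction on $|G|$. Both sides of the asserted equality vanish when $p\nmid|G|$, since then Maschke's theorem gives $H^1(G,S)=0$ and $G$ has no $p$-chief factors; so I may assume $p\mid|G|$ and pick a minimal normal subgroup $N$ of $G$. Because $G$ is $p$-solvable, $N$ is either a $p'$-group or an elementary abelian $p$-group. Suppose first that $N$ is a $p'$-group. Then $H^{>0}(N,S)=0$ for every $\F_p[G]$-module $S$, so the five-term exact sequence of $1\to N\to G\to G/N\to1$ gives $H^1(G,S)\cong H^1(G/N,S^N)$. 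If $N$ acts nontrivially on an irreducible $S$, then $S^N=0$ (a proper $\F_p[G]$-submodule of $S$), so $H^1(G,S)=0$, while $[G:S]_{\rm split}=0$ because $N$ centralizes every $p$-chief factor of $G$; if $N$ acts trivially on $S$, then $H^1(G,S)\cong H^1(G/N,S)$ and $[G:S]_{\rm split}=[G/N:S]_{\rm split}$, the $p'$-group $N$ contributing no $p$-chief factor (here one uses that this multiplicity does not depend on the chosen chief series). In either case the inductive hypothesis applied to $G/N$ yields the equality for $G$.

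Now suppose $N$ is an elementary abelian $p$-group. Then $N$ is an irreducible $\F_p[G]$-module, and $N$ acts trivially on every irreducible $\F_p[G]$-module $S$, since any nonzero $\F_p[N]$-module has nonzero $N$-fixed points while $S^N$ is an $\F_p[G]$-submodule of the irreducible module $S$. Hence the five-term exact sequence reads
\[
0\longrightarrow H^1(G/N,S)\longrightarrow H^1(G,S)\longrightarrow\Hom_{\F_p[G]}(N,S)\xrightarrow{\ \tau\ }H^2(G/N,S).
\]
If $N\not\cong S$, then $\Hom_{\F_p[G]}(N,S)=0$ by Schur's lemma, so $H^1(G,S)\cong H^1(G/N,S)$ and $[G:S]_{\rm split}=[G/N:S]_{\rm split}$, and we conclude by induction. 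If $N\cong S$, then $\Hom_{\F_p[G]}(N,S)\cong\End_{\F_p[G]}(S)$ is a division ring, and the transgression $\tau$ sends $\mathrm{id}_N$ to the class of the extension $1\to N\to G\to G/N\to1$ in $H^2(G/N,S)$ (up to sign); hence $\tau=0$ if this extension splits and $\tau$ is injective otherwise. Correspondingly $\dim_{\F_p}H^1(G,S)=\dim_{\F_p}H^1(G/N,S)+\varepsilon\cdot\dim_{\F_p}\End_{\F_p[G]}(S)$ and $[G:S]_{\rm split}=[G/N:S]_{\rm split}+\varepsilon$, where $\varepsilon=1$ if the extension splits and $\varepsilon=0$ otherwise, so the inductive hypothesis again gives the equality.

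For the converse, assume the equality holds for every irreducible $\F_p[G]$-module $S$ and suppose, for contradiction, that $G$ is not $p$-solvable. By Theorem \ref{gaschuetz} there is an irreducible $\F_p[G]$-module $S$ with $H^1(G/C,S)\neq0$, where $C:=C_G(S)$. Since $S^C=S$, the five-term exact sequence shows that inflation embeds $H^1(G/C,S)$ into $H^1(G,S)$ and that $\dim_{\F_p}H^1(G,S)=\dim_{\F_p}H^1(G/C,S)+d$, where $d$ is the dimension of the image of the restriction map $H^1(G,S)\to H^1(C,S)^{G/C}$. The key point is the inequality $d\ge\dim_{\F_p}\End_{\F_p[G]}(S)\cdot[G:S]_{\rm split}$: given a split $p$-chief factor $P/M\cong S$ of $G$ with complement $H/M$ in $G/M$, the projection of $G/M=(P/M)\rtimes(H/M)$ onto its abelian normal factor is a $1$-cocycle $G/M\to P/M$ whose inflation to $G$, after composition with an isomorphism $P/M\cong S$ and with the elements of $\End_{\F_p[G]}(S)$, produces a $\dim_{\F_p}\End_{\F_p[G]}(S)$-dimensional family of classes in $H^1(G,S)$; since $P$ acts trivially on $P/M\cong S$ we have $P\subseteq C$, and these classes restrict to nonzero classes already on $P$, hence map injectively into $H^1(C,S)^{G/C}$, and an induction along a fixed chief series shows that the contributions of the $[G:S]_{\rm split}$ split factors are linearly independent there. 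Granting this, $\dim_{\F_p}H^1(G,S)=\dim_{\F_p}H^1(G/C,S)+d>\dim_{\F_p}\End_{\F_p[G]}(S)\cdot[G:S]_{\rm split}$, contradicting the hypothesis; hence $G$ is $p$-solvable.

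The routine part is the bookkeeping with the five-term exact sequence, Schur's lemma, and the behaviour of $[G:S]_{\rm split}$ under passage to a chief quotient. The main obstacle is the inequality $d\ge\dim_{\F_p}\End_{\F_p[G]}(S)\cdot[G:S]_{\rm split}$ in the converse, that is, showing that the cocycles built from distinct split chief factors remain linearly independent after restriction to $C_G(S)$; this is essentially Willems' theorem, and it is proved by an induction on the length of a chief series parallel to the one used for the forward implication. The other essential ingredient is, of course, Theorem \ref{gaschuetz} itself.
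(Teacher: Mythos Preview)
The paper does not actually prove Theorem~\ref{stammbach}; it is quoted in the introduction as a known result, with the ``only if'' part attributed to Gasch\"utz and the converse to Stammbach \cite[Corollary~1]{S2} and Willems \cite[Theorem~3.9]{W}. The route implicit in these references, and the one the paper follows verbatim in the Lie-algebra setting (Theorem~\ref{absplit} together with Theorem~\ref{charsolv}), is different from yours: one first proves, for an \emph{arbitrary} finite group $G$ and every irreducible $\F_p[G]$-module $S$, the exact formula
\[
[G:S]_{\rm split}=\dim_\D H^1(G,S)-\dim_\D H^1(G/C_G(S),S),\qquad \D=\End_{\F_p[G]}(S),
\]
by induction on $|G|$ along a chief series; then both implications of Theorem~\ref{stammbach} are immediate from Theorem~\ref{gaschuetz}. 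Your forward direction is precisely this induction, but carried out only under the extra hypothesis that $G$ is $p$-solvable (so that a minimal normal subgroup is a $p'$-group or elementary abelian $p$-group). If you add the easy case where $N$ is non-abelian (then $\Hom_G(N/[N,N],S)=0$ and $N$ contributes no abelian chief factor) and the case $N\not\subseteq C_G(S)$ treated exactly as in the last paragraph of the proof of Theorem~\ref{absplit}, your induction yields the general formula and the whole theorem drops out.

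As written, your converse has a genuine gap. You reduce it to the inequality $d\ge\dim_{\F_p}\End_{\F_p[G]}(S)\cdot[G:S]_{\rm split}$, where $d$ is the dimension of the image of the restriction $H^1(G,S)\to H^1(C,S)^{G/C}$, and then say that ``an induction along a fixed chief series'' shows the cocycles coming from distinct split chief factors stay linearly independent after restriction to $C$. But you have not set up that induction: for a general (non-$p$-solvable) $G$ a minimal normal subgroup need not be elementary abelian or a $p'$-group, and you have given no mechanism to separate the contributions of different split factors inside $H^1(C,S)^{G/C}$. This inequality is exactly one half of Stammbach's formula above, so you are essentially assuming what you set out to prove. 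The fix is the one just described: prove the full formula for all $G$ first, rather than splitting the argument into a forward implication and an ad hoc inequality.
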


The main goal of this paper is to investigate whether analogues of Theorem
\ref{gaschuetz} and Theorem \ref{stammbach} hold in the context of ordinary
Lie algebras. Some time ago in an unpublished manuscript the first author
of this paper has obtained analogues of these results for restricted Lie
algebras (see \cite[Remark after Theorem 7]{F2}). The crucial point in the
argument is an analogue of Theorem \ref{gaschuetz} for ordinary Lie algebras
(see \cite[Proposition 1]{F3}). The proof in \cite {F3} was a modification
of the proof of a similar characterization of supersolvable Lie algebras due
to D.\ W.\ Barnes (see \cite[Theorem 4]{B2}). It turns out that there is a
gap in both these proofs which will be fixed in the present paper. This is
achieved by applying Shapiro's lemma for truncated coinduced modules (see
\cite[Theorem in \S5 and Corollary 1 in \S3]{D2} or \cite[Theorem 2.1 and
Theorem 1.4]{FS}) in order to establish a refinement of a non-vanishing
theorem of G.\ Seligman for the first degree cohomology of a non-solvable
finite-dimensional Lie algebra in prime characteristic (see \cite[p.\ 102]{S}).
In this regard our approach is similar to Stammbach's proof of the ``if"-part
of Theorem~\ref{gaschuetz} (see the proof of \cite[Lemma 2]{S1}).

We begin with a Lie-theoretic analogue of the main result of \cite{S2} (see
Theorem \ref{absplit}). This result is already contained in the first author's
unpublished manuscript (see \cite[Lemma 5]{F2}), but the proof there implicitly
uses that the multiplicity of split abelian chief factors isomorphic to a given
irreducible module is independent of the chief series. The proof given here follows
the argument used in the proof of \cite[Lemma 5]{F2}, but also deals with a case
not considered in \cite{F2}. As a consequence of Theorem \ref{absplit} and
Barnes' cohomological vanishing theorem for solvable Lie algebras (see
\cite[Theorem 2]{B1}), one obtains the Lie-theoretic analogue of Gasch\"utz'
theorem on split $p$-chief factors (see Theorem \ref{solv}). In the third section
we show that for fields of characteristic zero the analogues of the conditions in
Theorem \ref{gaschuetz} and Theorem \ref{stammbach} are always satisfied.
Thus, as in the group case, only modular solvable Lie algebras can be characterized
by these properties. This will be the main goal of the fourth section (see Theorem
\ref{charsolv}). In the final section we apply the results obtained in Sections 2 and
4 to restricted Lie algebras. The equivalence (i)$\Longleftrightarrow$(viii) in Theorem
\ref{pim} is an analogue of Willems' module-theoretic characterization of $p$-solvable
groups (see \cite[Theorem 3.9]{W}) for restricted Lie algebras. As a by-product we
establish several results on the second Loewy layer of the projective cover of the
trivial irreducible module. Most of the results in Section 5 were already contained in
\cite[Section 4]{F2}, but have never been published before.


\section{Split abelian chief factors and first degree cohomology}


In analogy to group theory we define a {\em chief series\/} for a finite-dimensional
Lie algebra $L$ to be an ascending chain $0=L_0\subset L_1\subset\cdots\subset
L_n=L$ of ideals in $L$ such that $L_j/L_{j-1}$ is a minimal (non-zero) ideal of
$L/L_{j-1}$ for every integer $j$ with $1\le j\le n$. Any $L_j/L_{j-1}$ is then
called a {\em chief factor\/} of $L$, and we say that $L_j/L_{j-1}$ is an {\it
abelian chief factor\/} if it is an abelian Lie algebra. For a finite-dimensional
irreducible $L$-module $S$ and a given chief series $0=L_0\subset L_1\subset\cdots
\subset L_n=L$ of $L$ we denote by $[L:S]_{\rm split}$ the number of abelian chief
factors $L_j/L_{j-1}$ that are isomorphic to $S$ as an $L$-module and for which the
exact sequence $0\to L_j/L_{j-1}\to L/L_{j-1}\to L/L_j\to 0$ splits in the category
of Lie algebras. Observe that any composition series of the adjoint $L$-module is
also a chief series of $L$ and vice versa. (But note that split chief factors do
not necessarily split in the category of $L$-modules, e.g., this happens for the
one-dimensional ideal of the non-abelian two-dimensional Lie algebra.) However, only
abelian chief factors will be relevant for our purpose. (Note that every chief factor
of a solvable Lie algebra is abelian.) As we will show that $[L:S]_{\rm split}$ is
independent of the choice of the chief series, we will not indicate the chief series
in the notation.

Our first result is completely analogous to the main result of \cite{S2} and uses
\cite[Lemma 2]{B2}:

\begin{thm}\label{absplit}
Let $L$ be a finite-dimensional Lie algebra over a field $\F$ of arbitrary characteristic,
and let $S$ be a finite-dimensional irreducible $L$-module with centralizer algebra
$\D:=\End_L(S)$. Then
\begin{equation}\label{mult}
[L:S]_{\rm split}=\dim_\D H^1(L,S)-\dim_\D H^1(L/\ann_L(S),S)
\end{equation}
holds. In particular, $[L:S]_{\rm split}$ is independent of the choice of the chief
series of $L$.
\end{thm}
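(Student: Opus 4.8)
The plan is to induct on $\dim L$, peeling off one chief factor at a time from the bottom of a chief series. Fix a chief series $0 = L_0 \subset L_1 \subset \cdots \subset L_n = L$, write $A := L_1$ for the minimal ideal at the bottom, and set $\bar L := L/A$. I would first dispose of the case $A$ non-abelian (which contributes nothing to $[L:S]_{\rm split}$): here one shows the right-hand side of \eqref{mult} is unchanged when passing to $\bar L$ and invokes induction; the comparison of $H^1(L,S)$ with $H^1(\bar L, S)$ — together with the parallel comparison of $H^1(L/\ann_L(S),S)$ with its analogue over $\bar L$ — uses the inflation--restriction (five-term) exact sequence, noting $H^1(A,S)^{\bar L} $ enters and one tracks how $\ann_L(S)$ behaves. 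So the substantive case is $A$ abelian; then $A$ is an irreducible $L$-module (via the adjoint action), so $A \cong T$ for some finite-dimensional irreducible $L$-module $T$, and $[L:S]_{\rm split} = [\bar L : S]_{\rm split} + \varepsilon$, where $\varepsilon = 1$ if $T \cong S$ as $L$-modules and the extension $0 \to A \to L \to \bar L \to 0$ splits as Lie algebras, and $\varepsilon = 0$ otherwise.

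The heart of the argument is the five-term exact sequence for the ideal $A \trianglelefteq L$ with coefficients in $S$:
\begin{equation*}
0 \to H^1(\bar L, S^A) \to H^1(L, S) \to H^1(A,S)^{\bar L} \to H^2(\bar L, S^A) \to H^2(L,S).
\end{equation*}
Since $A$ is abelian it acts trivially on the irreducible module $S$ unless $S$ is not $A$-trivial; I would split according to whether $A$ acts trivially on $S$. If $A$ does not act trivially on $S$, then $S^A = 0$, $\ann_L(S)$ does not contain $A$, and one checks both $H^1(L,S)$ and $H^1(L/\ann_L(S),S)$ are computed "the same way" so that the difference in \eqref{mult} is unaffected and equals the corresponding difference over $\bar L$ (here $T \not\cong S$, consistent with $\varepsilon = 0$); induction closes this case. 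If $A$ acts trivially on $S$, then $S^A = S$, and the five-term sequence becomes
\begin{equation*}
0 \to H^1(\bar L, S) \to H^1(L,S) \to \Hom_L(A, S) \to H^2(\bar L, S) \to \cdots,
\end{equation*}
where I have used $H^1(A,S)^{\bar L} = \Hom_{\F}(A,S)^{\bar L} = \Hom_L(A,S)$ since $A$ acts trivially on both $A$ and $S$. Now $\Hom_L(A,S) = \Hom_L(T,S)$ is a $\D$-module of $\D$-dimension $1$ if $T \cong S$ and $0$ otherwise (by Schur's lemma, since $\D = \End_L(S)$ is a division algebra). The key geometric input — this is where \cite[Lemma 2]{B2} enters — is that the connecting map $\Hom_L(A,S) \to H^2(\bar L, S)$ sends a homomorphism to (a multiple of) the class of the extension $0 \to A \to L \to \bar L \to 0$ pushed forward along it; in particular, when $T \cong S$, the image of $H^1(L,S) \to \Hom_L(A,S)$ is all of $\Hom_L(A,S)$ precisely when that extension splits as Lie algebras. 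Thus, over $\D$,
\begin{equation*}
\dim_\D H^1(L,S) = \dim_\D H^1(\bar L, S) + \varepsilon,
\end{equation*}
which matches exactly the jump in $[L : S]_{\rm split}$.

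It remains to run the identical analysis for $L/\ann_L(S)$ in place of $L$ so that the two terms on the right of \eqref{mult} are compared compatibly; the point is that $A \subseteq \ann_L(S)$ when $A$ acts trivially on $S$ and $A \not\subseteq \ann_L(S)$ otherwise, and in either case $\ann_L(S)/(\ann_L(S)\cap A)$ is the annihilator of $S$ in $\bar L$ (or in $L/A$), so $H^1(L/\ann_L(S),S)$ relates to $H^1(\bar L/\ann_{\bar L}(S),S)$ and one gets $\dim_\D H^1(L/\ann_L(S),S) = \dim_\D H^1(\bar L/\ann_{\bar L}(S),S)$ exactly when $\varepsilon$ comes from a chief factor genuinely "below" the action, i.e., the difference on the right of \eqref{mult} increases by $\varepsilon$ from $\bar L$ to $L$. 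Combining, both sides of \eqref{mult} increase by the same $\varepsilon$, and induction completes the proof; the base case $n = 1$ ($L$ itself a minimal ideal) is a direct check. The independence of the chief series is then immediate, since the right-hand side of \eqref{mult} manifestly does not refer to any chief series. I expect the main obstacle to be the bookkeeping in the non-abelian-$A$ case and, more delicately, verifying that the connecting homomorphism in the five-term sequence really is (up to the $\D$-action) the class of the Lie algebra extension — this is exactly the content one must extract from \cite[Lemma 2]{B2}, and getting the $\D$-linearity statements (rather than merely $\F$-linearity) correct is the subtle part.
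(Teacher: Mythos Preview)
Your overall strategy---induction on $\dim_\F L$ by peeling off the bottom chief factor $A=L_1$, the five-term exact sequence, and the use of \cite[Lemma~2]{B2} to identify the transgression with the class of the Lie extension---is exactly the paper's approach, and your treatment of the case $A\subseteq\ann_L(S)$ (including the split/non-split dichotomy when $A\cong S$) matches the paper's argument.

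There is, however, a genuine gap in the case where $A$ does \emph{not} act trivially on $S$. You write that ``the difference in \eqref{mult} is unaffected and equals the corresponding difference over $\bar L$\ldots\ induction closes this case,'' and later that ``$\ann_L(S)/(\ann_L(S)\cap A)$ is the annihilator of $S$ in $\bar L$.'' But if $A\cdot S\neq 0$ then $S$ is simply not an $\bar L=L/A$-module, so neither $[\bar L:S]_{\rm split}$, nor $H^1(\bar L,S)$, nor $\ann_{\bar L}(S)$ makes sense; the inductive hypothesis cannot be invoked for $\bar L$ with coefficients in $S$. The same obstruction arises in your non-abelian case whenever the perfect minimal ideal $A$ acts non-trivially on $S$.

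The paper avoids this by \emph{not} using induction in that case. Instead it argues directly that both sides of \eqref{mult} vanish when $L_1\not\subseteq\ann_L(S)$: the left-hand side is zero because any abelian chief factor isomorphic to $S$ would be annihilated by $S$'s annihilator, forcing $L_1\subseteq\ann_L(S)$; the right-hand side is zero because the five-term sequences for $L_1\trianglelefteq L$ and for $(L_1+\ann_L(S))/\ann_L(S)\trianglelefteq L/\ann_L(S)$ (both with $S^{L_1}=0$) fit into a commutative square whose vertical maps are isomorphisms, yielding $H^1(L,S)\cong H^1(L/\ann_L(S),S)$. You should replace your inductive step in that case by this direct argument (or an equivalent one).
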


\begin{proof}
We proceed by induction on the dimension of $L$. If $L$ is one-dimensional, then
the assertion is easy to check. Thus we may assume that the dimension of $L$ is
greater than one, and that the claim holds for all Lie algebras of dimension less
than $\dim_\F L$. Let $0=L_0\subset L_1\subset\cdots\subset L_n=L$ be a chief
series of $L$. For the remainder of the proof the multiplicity $[L:S]_{\rm split}$
always refers to this fixed chief series.

If $\ann_L(S)=0$, then the right-hand side of \eqref{mult} is zero. But as abelian
chief factors have non-zero annihilators, the left-hand side also vanishes and the
assertion holds. So we may assume that $\ann_L(S)\neq 0$.

Firstly, we assume that $L_1$ is contained in $\ann_L(S)$. Then the five-term exact
sequence for Lie algebra cohomology (see \cite[Theorem 6]{HS}) specializes to
\begin{equation}\label{5term}
0\to H^1(L/L_1,S)\to H^1(L,S)\to\Hom_L(L_1/[L_1,L_1],S)\to H^2(L/L_1,S)\,.
\end{equation}
Since $S$ is also an irreducible $L/L_1$-module, one obtains by induction that
\begin{equation}\label{indhyp}
[L/L_1:S]_{\rm split}=\dim_\D H^1(L/L_1,S)-\dim_\D H^1(L/\ann_L(S),S)\,.
\end{equation}
As $L_1$ is a minimal ideal of $L$, $L_1$ is either perfect or abelian.
In the former case, the third term in \eqref{5term} vanishes, and therefore
$H^1(L/L_1,S)\cong H^1(L,S)$. Since $L_1$ is not abelian, one has $[L:S]_{\rm
split}=[L/L_1:S]_{\rm split}$. Hence \eqref{mult} holds in this case.

If $L_1$ is abelian, one has $\Hom_L(L_1/[L_1,L_1],S)=\Hom_L(L_1,S)$. If $L_1$
and $S$ are not isomorphic as $L$-modules, then $\Hom_L(L_1,S)$ vanishes, and
the assertion follows as before.

For $L_1\cong S$ one has to distinguish two cases: The abelian chief factor $L_1$
is split, or not split. In case that $L_1$ is split, one has
\begin{equation}\label{add}
\begin{aligned}
&&[L:S]_{\rm split} & =[L/L_1:S]_{\rm split}+1\\
&&& = \dim_\D H^1(L/L_1,S)-\dim_\D H^1(L/\ann_L(S),S)+1\,,
\end{aligned}
\end{equation}
and \cite[Lemma 2(a)]{B2} shows that the transgression $\Hom_L(L_1,S)\to H^2
(L/L_1,S)$ is zero. Thus the exactness of \eqref{5term} implies that the
restriction $H^1(L,S)\to\Hom_L(L_1,S)$ is surjective, and therefore
\begin{equation}\label{add2}
\begin{aligned}
&& \dim_\D H^1(L,S) & =\dim_\D H^1(L/L_1,S)+\dim_\D\Hom_L(L_1,S)\\
&&& =\dim_\D H^1(L/L_1,S)+1\,.
\end{aligned}
\end{equation}
Thus \eqref{add} and \eqref{add2} yield the assertion. Suppose that $L_1$ is
not split. In this case the transgression $\Hom_L(L_1,S)\to H^2(L/L_1,S)$ is
injective (see \cite[Lemma 2(b)]{B2}). According to \eqref{5term}, the inflation
$H^1(L/L_1,S)\to H^1(L,S)$ is bijective. Then one has $[L:S]_{\rm split}=
[L/L_1:S]_{\rm split}$. Hence the claim follows from \eqref{indhyp}.

Finally, assume that $L_1$ is not contained in $\ann_L(S)$, i.e., $L_1\cap
\ann_L(S)=0$ and $S^{L_1}=0$. Suppose that $L_j/L_{j-1}$ is abelian and
$L_j/L_{j-1}\cong S$ as $L$-modules for some integer $j$ with $1\leq j\leq n$.
Then $L_j$ -- and therefore $L_1$ -- would be contained in $\ann_L(S)$, a contradiction.
Consequently, $[L:S]_{\rm split}=0$. As $S^{L_1}=0$, one concludes from the
five-term exact sequence
\begin{equation*}
0\longrightarrow H^1(L/L_1,S^{L_1})\longrightarrow H^1(L,S)\longrightarrow H^1
(L_1,S)^L\longrightarrow H^2(L/L_1,S^{L_1})
\end{equation*}
that the vertical mappings in the commutative diagram
\begin{equation*}
\label{eq:comdia}
\xymatrix{
H^1(L/\ann_L(S),S)\ar[r]^-\alpha\ar[d]&H^1(L,S)\ar[d]\\
H^1(L_1+\ann_L(S)/\ann_L(S),S)^L\ar[r]^-\beta&H^1(L_1,S)^L
}
\end{equation*}
are isomorphisms. Hence, because $\beta$ is an isomorphism, $\alpha$ is an
isomorphism as well. This shows that in this case the right-hand side of
\eqref{mult} is also zero.

Since the right-hand side of \eqref{mult} does not depend on the choice of
the chief series, the left-hand side does not either. This completes the proof
of the theorem.
\end{proof}

In the extreme case $\ann_L(S)=L$, Theorem \ref{absplit} has the following
implication:

\begin{cor} \label{triv}
Let $L$ be a finite-dimensional Lie algebra over a field $\F$ of arbitrary
characteristic. Then the trivial irreducible $L$-module occurs with multiplicity
$\dim_\F L/[L,L]$ as a split abelian chief factor of $L$.
\end{cor}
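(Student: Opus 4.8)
The plan is to obtain this as the special case $S=\F$ (the one-dimensional module with zero action) of Theorem~\ref{absplit}. First I would record that for this $S$ the centralizer algebra is $\D=\End_L(\F)=\F$, since every $\F$-linear endomorphism of a one-dimensional space is automatically $L$-linear when $L$ acts trivially. Second, every element of $L$ annihilates $\F$, so $\ann_L(S)=L$; consequently $L/\ann_L(S)$ is the zero Lie algebra and $H^1(L/\ann_L(S),S)=H^1(0,\F)=0$.

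Substituting these two facts into the identity \eqref{mult} of Theorem~\ref{absplit} immediately gives
\begin{equation*}
[L:\F]_{\rm split}=\dim_\D H^1(L,S)-\dim_\D H^1(L/\ann_L(S),S)=\dim_\F H^1(L,\F)\,.
\end{equation*}
So it remains to identify $\dim_\F H^1(L,\F)$. For the trivial module the zeroth differential vanishes, so there are no non-trivial coboundaries in degree one, and a one-cocycle is precisely an $\F$-linear map $f\colon L\to\F$ satisfying $f([x,y])=x\cdot f(y)-y\cdot f(x)=0$ for all $x,y\in L$; that is, a linear functional on $L$ vanishing on $[L,L]$. Hence $H^1(L,\F)\cong\Hom_\F(L/[L,L],\F)$, whose dimension is $\dim_\F L/[L,L]$, and the corollary follows.

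I expect no genuine obstacle: the result is a direct specialization of Theorem~\ref{absplit} together with the standard computation of first Lie algebra cohomology with trivial coefficients as the dual of the abelianization. The only points deserving an explicit line are the identification $\D=\F$ and the vanishing of $H^1$ of the zero Lie algebra, both of which are routine.
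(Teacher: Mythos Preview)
Your proof is correct and is exactly the intended argument: the paper derives Corollary~\ref{triv} as the special case $\ann_L(S)=L$ of Theorem~\ref{absplit}, which is precisely what you do, together with the standard identification $H^1(L,\F)\cong (L/[L,L])^*$.
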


The analogue of Gasch\"utz' theorem on split $p$-chief factors (see the ``only
if"-part of Theorem \ref{stammbach} in the introduction) for solvable Lie algebras
is another immediate consequence of Theorem \ref{absplit} in conjunction with
\cite[Theorem 2]{B1} and generalizes \cite[Theorem 1]{B2}.

\begin{thm}\label{solv}
Let $L$ be a finite-dimensional solvable Lie algebra over a field $\F$ of arbitrary
characteristic. Then $$\dim_\F H^1(L,S)=\dim_\F\End_L(S)\cdot[L:S]_{\rm split}$$
holds for every finite-dimensional irreducible $L$-module $S$.
\end{thm}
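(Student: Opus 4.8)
The plan is to combine Theorem~\ref{absplit} with Barnes' vanishing theorem \cite[Theorem 2]{B1} for the first cohomology of solvable Lie algebras. The key observation is that when $L$ is solvable and $S$ is a finite-dimensional irreducible $L$-module, the quotient $L/\ann_L(S)$ is again solvable, and acts faithfully and irreducibly on $S$. Barnes' theorem then asserts that $H^1(L/\ann_L(S),S)=0$: a solvable Lie algebra acting faithfully and irreducibly on a module has vanishing first cohomology with coefficients in that module. Granting this, the right-hand side of \eqref{mult} collapses to $\dim_\D H^1(L,S)$, where $\D=\End_L(S)$, so that $[L:S]_{\rm split}=\dim_\D H^1(L,S)$.

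It then remains only to pass from dimension over the division algebra $\D$ to dimension over $\F$. Since $H^1(L,S)$ is a right $\D$-module (indeed a $\D$-vector space), one has $\dim_\F H^1(L,S)=\dim_\D H^1(L,S)\cdot\dim_\F\D$, and $\dim_\F\D=\dim_\F\End_L(S)$. Multiplying the identity $[L:S]_{\rm split}=\dim_\D H^1(L,S)$ through by $\dim_\F\D$ yields
\begin{equation*}
\dim_\F H^1(L,S)=\dim_\F\End_L(S)\cdot[L:S]_{\rm split}\,,
\end{equation*}
which is exactly the asserted formula. One should also note the degenerate case: if $\ann_L(S)=0$ then $S$ is the trivial module of a one-dimensional (hence abelian) $L$ only when $\dim L=1$, and more generally a faithful irreducible module over a nonzero solvable $L$ still has $H^1(L,S)=0$ by Barnes, so the formula reads $0=0$ there as well; this is already subsumed in the argument above since \eqref{mult} holds unconditionally.

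The main point requiring care is the correct invocation of \cite[Theorem 2]{B1}: one must check that its hypotheses are met, namely that $L/\ann_L(S)$ is a finite-dimensional solvable Lie algebra acting irreducibly and faithfully on the finite-dimensional module $S$ over an arbitrary field $\F$. Faithfulness of the $L/\ann_L(S)$-action on $S$ is immediate by definition of the annihilator, irreducibility of $S$ as an $L/\ann_L(S)$-module is equivalent to its irreducibility as an $L$-module, and solvability is inherited by quotients. So there is no real obstacle; the theorem follows formally from Theorem~\ref{absplit} once Barnes' vanishing result is in hand. (The substantive content has already been placed in Theorem~\ref{absplit}, whose proof is the technically delicate part.)
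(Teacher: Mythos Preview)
Your proof is correct and follows exactly the approach the paper indicates: combine Theorem~\ref{absplit} with Barnes' vanishing theorem \cite[Theorem~2]{B1} applied to the solvable quotient $L/\ann_L(S)$ acting faithfully and irreducibly on $S$, then convert from $\D$-dimension to $\F$-dimension. The aside about the ``degenerate case'' $\ann_L(S)=0$ is slightly muddled (a faithful $S$ is not trivial unless $L=0$) but, as you yourself note, is already subsumed in the main argument and can simply be dropped.
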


The same is also true for any finite-dimensional Lie algebra in characteristic zero
(see the remark after \cite[Lemma 5]{F2}) as we will prove in the next section.


\section{Lie algebras in characteristic zero}


We begin by proving that in characteristic zero the cohomology of finite-dimen\-sional
faithful irreducible modules always vanishes (see the remark after \cite[Proposition
1]{F3}):

\begin{thm}\label{faith}
Let $L\ne 0$ be a finite-dimensional Lie algebra over a field of characteristic
zero, and let $S$ be a finite-dimensional faithful irreducible $L$-module. Then
$H^n(L,S)=0$ for every non-negative integer $n$.
\end{thm}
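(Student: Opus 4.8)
The plan is to exploit the classical structure theory of Lie algebras in characteristic zero, together with the fact that a faithful irreducible module forces the Lie algebra to be quite constrained. First I would argue that $L$ must be reductive. Indeed, let $R$ denote the solvable radical of $L$. Since $S$ is a faithful irreducible $L$-module, by a lemma of the type used in Lie's theorem / Whitehead's setup, the action of the nilradical (or more precisely the nilpotent part acting on $S$) must behave well: the radical $R$ acts on $S$ by scalars on weight spaces, and the derived subalgebra $[R,R]$, as well as any nilpotent ideal, acts nilpotently on $S$; faithfulness then forces such nilpotent ideals to be zero, so $R$ is abelian and in fact central (because $[L,R]$ acts nilpotently on $S$ by a standard argument, hence is zero by faithfulness). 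Thus $L = Z(L) \oplus [L,L]$ with $[L,L]$ semisimple, i.e.\ $L$ is reductive, and $Z(L)$ acts on $S$ by scalars.

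Next I would split into cases according to whether $Z(L)$ is nonzero. If $Z(L) \neq 0$, pick $z \in Z(L)$ acting on $S$ as a nonzero scalar $\lambda \cdot \id_S$ (such $z$ exists: if every central element acted as zero, centrality plus faithfulness would force $Z(L)=0$, contradicting our case assumption — here one uses that $Z(L)$ being faithful forces at least one element to act nontrivially, and since it acts as a scalar that scalar is nonzero). Then $z$ gives a central element of the universal enveloping algebra acting invertibly on $S$, so by the standard argument (the "Casimir-type" vanishing argument in Hochschild--Serre cohomology, or directly: the cochain complex computing $H^*(L,S)$ is a complex of $U(L)$-modules on which $z$ acts invertibly, yet $z$ also acts as zero because it lies in $L$ and cohomology kills the image of $L$) one gets $H^n(L,S) = 0$ for all $n$. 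If instead $Z(L) = 0$, then $L = [L,L]$ is semisimple, and faithfulness together with Weyl's theorem on complete reducibility forces $S$ to be a nontrivial irreducible module (it cannot be the trivial module since that is not faithful for $L \neq 0$); then the Whitehead lemmas — or more simply the vanishing $H^n(L,S) = 0$ for all $n \geq 0$ for a finite-dimensional semisimple Lie algebra acting on a module without trivial composition factors, which follows from the nonsingularity of the Casimir operator on $S$ — gives the conclusion.

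The main obstacle, and the step I would spend the most care on, is the reduction to the reductive case: showing that faithfulness of an irreducible module forces the radical to be central. This requires the structural input that on a finite-dimensional irreducible module in characteristic zero the radical acts "almost trivially" — concretely, that $[L,R]$ acts as zero. One clean route is: by Lie's theorem the solvable ideal $R$ has a common eigenvector in $S$, the weight $\mu \colon R \to \F$ is then $L$-invariant (this is Invariance of weights / an argument with $\ad$), so $R$ acts on all of $S$ via $\mu$ after noting the $\mu$-weight space is an $L$-submodule hence all of $S$ by irreducibility; consequently $[L,R] \subseteq \ker\mu$ acts trivially on the weight space, hence on $S$, hence $[L,R] = 0$ by faithfulness, so $R \subseteq Z(L)$. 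Once this structural fact is in hand, the cohomological vanishing in each case is routine via the invertible-central-element argument and the Whitehead lemmas, so the real content is packaging this reduction correctly.
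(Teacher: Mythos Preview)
Your proof is correct but follows a genuinely different path from the paper's. You first establish that $L$ is reductive (via the characteristic-zero fact that $[L,\mathrm{rad}(L)]$ acts by nilpotent endomorphisms on any finite-dimensional module, combined with irreducibility and faithfulness to force $[L,R]=0$), then split into the cases $Z(L)\neq 0$ (where a central element acts invertibly and the usual null-homotopy argument kills all cohomology) and $Z(L)=0$ (semisimple, handled by the Casimir/Whitehead vanishing). The paper never proves reductivity: in the non-semisimple case it simply picks any nonzero abelian ideal $I$, observes that $S^I=0$ (being an $L$-submodule of the faithful irreducible $S$), invokes Barnes' vanishing theorem to get $H^n(I,S)=0$ for all $n\ge 1$, and then reads off $H^n(L,S)\cong H^n(L/I,S^I)=0$ from the collapsing Hochschild--Serre spectral sequence. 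Your route extracts more structural information about $L$ at the cost of heavier classical input (invariance of weights, or the nilpotency of $[L,R]$ on representations); the paper's route is shorter and works directly from the existence of an abelian ideal without any reductivity statement. One small caution: your explicit final argument via Lie's theorem tacitly assumes an algebraically closed field; the cleaner field-independent version is the one you sketch earlier, using that $[L,R]$ acts nilpotently, so that $S^{[L,R]}$ is a nonzero $L$-submodule, hence all of $S$, hence $[L,R]=0$ by faithfulness.
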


\begin{proof}
Since $S$ is faithful and irreducible, we have that $S^L=0$. If $L$ is semisimple,
then the assertion is an immediate consequence of Whitehead's cohomological
vanishing theorem (see \cite[Theorem 14, p.\ 96]{J2}). Otherwise $L$ has a
non-zero abelian ideal $I$.

As $S^I$ is an $L$-submodule of the faithful irreducible $L$-module $S$, we
obtain that $S^I=0$. Then it follows from \cite[Theorem 1]{B1} that $H^n(I,S)=
0$ for every positive integer $n$. Finally, from the Hochschild-Serre spectral
sequence (see \cite[Theorem 6]{HS}) one concludes that $H^n(L,S)\cong H^n
(L/I,S^I)=0$ for every positive integer $n$.
\end{proof}

In particular, we have the following characteristic zero version of Barnes'
cohomological vanishing theorem \cite[Theorem 2]{B1}:

\begin{cor}\label{van}
Let $L$ be a finite-dimensional Lie algebra over a field of characteristic zero,
and let $S$ be a finite-dimensional irreducible $L$-module. Then $$H^n(L/\ann_L
(S),S)=0$$ for every positive integer $n$.
\end{cor}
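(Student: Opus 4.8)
The plan is to reduce the statement immediately to Theorem~\ref{faith}. The key point is that $S$ is naturally a module over the quotient Lie algebra $L/\ann_L(S)$, its $L/\ann_L(S)$-submodules coincide with its $L$-submodules, so $S$ is still irreducible over $L/\ann_L(S)$, and by the very definition of the annihilator the action of $L/\ann_L(S)$ on $S$ is faithful.

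First I would dispose of the degenerate case. If $\ann_L(S)=L$, then $L$ acts trivially on the irreducible module $S$, hence $S$ is one-dimensional and $L/\ann_L(S)=0$. The cohomology of the zero Lie algebra with coefficients in any module vanishes in every positive degree, so the claim holds in this case. (This is precisely why the statement is restricted to positive degrees $n$: for $n=0$ one has $H^0(0,S)=S\ne 0$.)

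In the remaining case $L/\ann_L(S)\ne 0$, and by the observation above $S$ is a finite-dimensional faithful irreducible $L/\ann_L(S)$-module. Applying Theorem~\ref{faith} with the Lie algebra $L/\ann_L(S)$ in place of $L$ then yields $H^n(L/\ann_L(S),S)=0$ for \emph{every} non-negative integer $n$, which in particular establishes the corollary for all positive integers $n$.

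I do not expect any genuine obstacle here: the corollary is a direct specialization of Theorem~\ref{faith}, the only subtlety being to recognize that passing from $L$ to $L/\ann_L(S)$ is exactly the operation that makes the module faithful while preserving both finite-dimensionality and irreducibility, together with the harmless bookkeeping for the trivial-module/zero-algebra case.
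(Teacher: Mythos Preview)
Your proposal is correct and is exactly the intended argument: the paper presents Corollary~\ref{van} as an immediate consequence (``In particular'') of Theorem~\ref{faith}, and your reduction---handling the trivial case $\ann_L(S)=L$ separately and otherwise applying Theorem~\ref{faith} to the nonzero algebra $L/\ann_L(S)$ with its faithful irreducible module $S$---is precisely that specialization.
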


As a consequence of Theorem \ref{absplit} and Corollary \ref{van}, we obtain the
following result on split abelian chief factors and first degree cohomology in
characteristic zero which generalizes \cite[Theorem 2]{B2}:

\begin{thm}\label{char0}
Let $L$ be a finite-dimensional Lie algebra over a field $\F$ of characteristic
zero. Then $$\dim_\F H^1(L,S)=\dim_\F\End_L(S)\cdot[L:S]_{\rm split}$$ holds
for every finite-dimensional irreducible $L$-module $S$.
\end{thm}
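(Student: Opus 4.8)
The plan is to deduce Theorem~\ref{char0} directly from Theorem~\ref{absplit} together with Corollary~\ref{van}. By Theorem~\ref{absplit}, for every finite-dimensional irreducible $L$-module $S$ with centralizer algebra $\D=\End_L(S)$ one has
\begin{equation*}
[L:S]_{\rm split}=\dim_\D H^1(L,S)-\dim_\D H^1(L/\ann_L(S),S)\,.
\end{equation*}
The key observation is that Corollary~\ref{van}, applied with $n=1$, gives $H^1(L/\ann_L(S),S)=0$, so the subtracted term disappears and we are left with $[L:S]_{\rm split}=\dim_\D H^1(L,S)$.

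Next I would convert the statement from $\D$-dimensions to $\F$-dimensions. Since $H^1(L,S)$ is a vector space over $\D=\End_L(S)$, one has $\dim_\F H^1(L,S)=\dim_\F\D\cdot\dim_\D H^1(L,S)$. Multiplying the displayed identity by $\dim_\F\D=\dim_\F\End_L(S)$ therefore yields
\begin{equation*}
\dim_\F\End_L(S)\cdot[L:S]_{\rm split}=\dim_\F H^1(L,S)\,,
\end{equation*}
which is exactly the assertion.

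I expect no serious obstacle here: the whole content has already been packaged into the earlier results. The only minor points to check are that $H^1(L,S)$ is genuinely a (right or left) $\D$-module so that the $\F$-dimension factors as claimed — this is standard, since $\D$ acts on the coefficient module $S$ and hence functorially on cohomology — and that Corollary~\ref{van} indeed covers the case $\ann_L(S)=L$, where $L/\ann_L(S)=0$ and the cohomology vanishes trivially. Both are immediate, so the proof is essentially a two-line combination of Theorem~\ref{absplit} and Corollary~\ref{van}.
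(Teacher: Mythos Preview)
Your proposal is correct and matches the paper's own argument: the theorem is stated there as an immediate consequence of Theorem~\ref{absplit} and Corollary~\ref{van}, with no further proof given. Your additional remarks about the $\D$-module structure on $H^1(L,S)$ and the trivial case $\ann_L(S)=L$ are accurate and simply make explicit what the paper leaves implicit.
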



\section{Lie algebras in prime characteristic}


The next result will be essential in proving Theorem \ref{seligman} and thereby correcting
the gap in the proof of \cite[Proposition 1]{F3}. Let $U(L)$ denote the universal enveloping
algebra of a Lie algebra $L$ over a field of prime characteristic $p$ and recall that $\caO(L,I)$
is the unital associative subalgebra of $U(L)$ generated by $I\cup\{z_1,\dots,z_k\}$, where
$ I$ is an ideal of $L$ and the elements $z_i:=e_i^{p^{m_i}}+v_i$ belong to the center of
$U(L)$, with $\{e_1,\dots,e_k\}$ a cobasis of $I$ in $L$ and $v_i\in U(L)_{(p^{m_i}-1)}$
(see \cite[p.\ 154]{FS}).

\begin{pro}\label{clifford}
Let $L$ be a finite-dimensional Lie algebra over a field $\F$ of prime characteristic, let
$I$ be an ideal in $L$ of codimension one, and let $S$ be an irreducible $I$-module.
Then every composition factor of the restriction of the truncated co-induced module
$\Hom_{\caO(L,I)}(U(L),S)$ to $I$ is isomorphic to $S$.
\end{pro}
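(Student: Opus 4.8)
The plan is to exploit the fact that $I$ has codimension one in $L$, so that $L = I \oplus \F e$ for some $e \in L \setminus I$, and the truncated co-induced module $M := \Hom_{\caO(L,I)}(U(L),S)$ is a free module of finite rank over $\F$, with the adjoint action of $e$ on $I$ playing the role of the grading operator. First I would recall from the construction of $\caO(L,I)$ that $U(L)$ is a free right $\caO(L,I)$-module with basis $\{e^j \mid 0 \le j \le p^m - 1\}$ for an appropriate $m$ (here $z = e^{p^m} + v$ lies in the center of $U(L)$), so that $M \cong \bigoplus_{j=0}^{p^m-1} S$ as $\F$-vector spaces, via $f \mapsto (f(e^j))_j$. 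This is essentially the "truncated polynomial" picture underlying Shapiro's lemma for these modules as cited from \cite{FS}.

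Next I would analyze the $I$-module structure of $M$ explicitly on this basis. For $x \in I$ and $f \in M$, the value $(x \cdot f)(e^j)$ is computed by moving $x$ past $e^j$ in $U(L)$: since $[e,x] = \ad(e)(x) \in I$, one gets $e^j x = \sum_{i} \binom{j}{i} (\ad(-e)^{\,i}(x))\, e^{j-i}$ modulo lower-order corrections coming from the $v$-part of $z$, and using that $f$ is $\caO(L,I)$-linear and $I \subseteq \caO(L,I)$. The upshot is that on the associated filtration of $M$ by $M_{\ge r} := \{f : f(e^j) = 0 \text{ for } j < r\}$ (or rather its opposite, ordered so that the top piece is a quotient isomorphic to $S$), each $x \in I$ acts on the subquotient $M_{\ge r}/M_{\ge r+1} \cong S$ by $x$ itself plus operators that shift strictly downward in the filtration. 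Hence $I$ preserves this filtration, and every subquotient is isomorphic to $S$ as an $I$-module.

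From this, the conclusion is immediate: a composition series of $M|_I$ refines (or is refined by) the filtration whose successive quotients are all copies of the irreducible module $S$, so every composition factor of $M|_I$ is isomorphic to $S$. Concretely, $M|_I$ is a module of length $p^m$ all of whose $p^m$ filtration quotients are $\cong S$, which forces the composition factors to be exactly $p^m$ copies of $S$.

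The main obstacle I expect is the bookkeeping in the second step: verifying carefully that the "lower-order" terms from both the binomial expansion of $e^j x$ and from the central element $z = e^{p^m} + v$ (with $v \in U(L)_{(p^m-1)}$) genuinely act by strictly filtration-lowering operators on $M$, so that the action on each subquotient really reduces to the original action of $x$ on $S$. One has to be a little careful about which direction the filtration runs (the central relation $z \cdot f = (\text{scalar})\, f$ ties the top and bottom of the truncated structure together), and to check that $\caO(L,I)$-linearity of $f$ is used correctly to reexpress $f(e^{p^m} g)$ in terms of the $f(e^j)$ with $j < p^m$. Once the filtration and the triangular shape of the $I$-action are set up correctly, the irreducibility of $S$ does the rest with no further work.
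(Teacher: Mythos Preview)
Your approach is correct and reaches the same conclusion, but the route differs from the paper's. The paper does not work directly with the truncated coinduced module; instead it passes to the dual $S^*$, considers the \emph{untruncated} induced module $M=U(L)\otimes_{U(I)}S^*$, and filters it by the obvious $t$-degree filtration $\caF^n(M)=\sum_{0\le\nu\le n}t^\nu\otimes S^*$. The Cartan--Weyl formula immediately shows each $\caF^n(M)$ is an $I$-submodule with successive quotients $\cong S^*$, and then the truncated induced module $U(L)\otimes_{\caO(L,I)}S^*$ is simply a \emph{quotient} of $M$, so its $I$-composition factors are automatically among those of $M$. Finally one dualizes, using $\Hom_{\caO(L,I)}(U(L),S)\cong\bigl(U(L)\otimes_{\caO(L,I)}S^*\bigr)^*$. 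The payoff of the paper's detour is that the central relation $z=e^{p^m}+v$ never has to be unpacked: the filtration lives on the infinite-dimensional induced module where there are no relations at all, and truncation is handled in one stroke as ``pass to a quotient''.

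Your direct approach buys economy---no dualizing, no auxiliary infinite-dimensional module---at the price of the bookkeeping you flag at the end. In fact that bookkeeping is lighter than you fear: for the $I$-action the central element $z$ plays no role whatsoever. With $f\in M_{\ge r}$ and $x\in I$, the identity $e^j x=\sum_{i=0}^{j}\binom{j}{i}(\ad e)^i(x)\,e^{j-i}$ only involves $e$-powers $\le j\le p^m-1$, and each coefficient $(\ad e)^i(x)$ lies in $I\subseteq\caO(L,I)$, so $\caO(L,I)$-linearity gives $(x\cdot f)(e^j)=\sum_{i}\binom{j}{i}(\ad e)^i(x)\cdot f(e^{j-i})$ with no overflow past $e^{p^m-1}$ and hence no appearance of $v$ or $z$. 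The filtration is therefore $I$-stable on the nose, and each subquotient is visibly $S$. The relation $z=e^{p^m}+v$ would only intervene if you computed the action of $e$ itself, which the statement does not require.
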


\begin{proof} Since $I$ has codimension one in $L$, there exists $t\in L$ such that
$L=\F t\oplus I$. Consider first the induced module $M:=U(L)\otimes_{U(I)}S^*$, where
$S^*$ denotes the linear dual of $S$ which is also an irreducible $I$-module. For
any non-negative integer $n$ put $$\caF^n(M):=\sum_{0\leq\nu\leq n}t^\nu\otimes S^*\,.$$
By virtue of the Cartan-Weyl formula (see \cite[Proposition 1.1.3(4)]{SF}), $\caF^n(M)$
is an $I$-submodule of $M$ for every $n$, and $(\caF^n(M))_{n\ge 0}$ is an exhaustive
increasing filtration of $M$. Set $\gr^n(M):=\caF^n(M)/\caF^{n-1}(M)$ for any non-negative
integer $n$. Then left multiplication by $t$ induces an isomorphism $\gr^n(M)\to\gr^{n+1}
(M)$ of $I$-modules. In particular, $\gr^n(M)\cong S^*$ for every non-negative integer
$n$. Hence $(\caF^n(M))_{n\ge 0}$ is a composition series of $M$ considered as an
$I$-module.

Let $\overline{M}:=U(L)\otimes_{\caO(L,I)}S^*$ denote the truncated induced $L$-module
of $S^*$. It follows from \cite[p.\ 35]{D3} (see also \cite[Theorem 4.4(1)]{F5}) that
$\overline{M}$ is a factor module of $M$, i.e., there exists an $L$-module epimorphism
$\pi:M\to\overline{M}$. Since $\overline{M}$ is finite-dimensional, there exists a
positive integer $n$ such that $\pi$ is mapping $\caF^n(M)$ onto $\overline{M}$. Thus
every composition factor of the restriction of $\overline{M}$ to $I$ is isomorphic to
$S^*$. As $\Hom_{\caO(L,I)}(U(L),S)$ is isomorphic to $\overline{M}^*$ (see the proof
of \cite[Theorem 4.1]{F5}), every composition factor of the restriction of $\Hom_{\caO
(L,I)}(U(L),S)$ to $I$ is isomorphic to $S$.
\end{proof}

The following is a refinement of a cohomological non-vanishing theorem of G.~Se\-ligman
\cite[p.\ 102]{S} for Lie algebras that are not solvable. Note that Seligman's result is
a consequence of a result of N.~Jacobson \cite[Theorem 2]{J1} (see also \cite[Theorem 2,
p.\ 205]{J2}) stating that every finite-dimensional Lie algebra over a field of prime
characteristic has finite-dimensional modules that are not completely reducible.

\begin{thm}\label{seligman}
Let $L$ be a finite-dimensional Lie algebra over a field $\F$ of prime characteristic. If
$L$ is not solvable, then there exists an irreducible $L$-module $S$ such that $H^1(L/
\ann_L(S),S)\not=0$.
\end{thm}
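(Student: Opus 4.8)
The plan is to reduce, by an induction on $\dim_\F L$, to a situation where one of two special cases applies: either $L$ has a minimal ideal which is a non-trivial irreducible module on which some suitable cohomology is forced to be nonzero, or $L$ has a composition factor that is a simple Lie algebra of ``classical'' or ``Cartan'' type, whose non-complete-reducibility (Jacobson's theorem) produces a non-split extension and hence a nonzero $H^1$. The key technical device I would use is Proposition~\ref{clifford} together with Shapiro's lemma for truncated (co)induced modules: if $I$ is an ideal of codimension one in $L$ and $S$ is an irreducible $I$-module with $H^1(I,S)\ne 0$, then $H^1(L,\Hom_{\caO(L,I)}(U(L),S))\cong H^1(I,S)\ne 0$, and by Proposition~\ref{clifford} every composition factor of the restriction to $I$ of the truncated coinduced module is isomorphic to $S$. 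Passing to an irreducible $L$-quotient (or sub) of this coinduced module, and using the long exact sequences in cohomology, one transports the non-vanishing from $I$ up to $L$, at the cost of replacing $S$ by some irreducible $L$-module $T$ with $\ann_L(T)$ not too large.

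First I would set up the induction: assume the theorem holds for all non-solvable Lie algebras of dimension less than $\dim_\F L$. Since $L$ is not solvable, $\jac(L)$ (the solvable radical) is a proper ideal and $L/\jac(L)$ is semisimple and nonzero. If $\jac(L)=0$ then $L$ itself is semisimple; here I would invoke Jacobson's theorem that $L$ has a finite-dimensional module that is not completely reducible, take a non-split short exact sequence $0\to S'\to M\to S''\to 0$ of $L$-modules with $S',S''$ irreducible, and observe that the connecting data gives a nonzero class in $H^1(L,\Hom_\F(S'',S'))$; decomposing $\Hom_\F(S'',S')$ into irreducible $L$-constituents, at least one constituent $S$ has $H^1(L,S)\ne 0$. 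It remains to check $\ann_L(S)$ behaves: since $L$ is semisimple every proper quotient is again semisimple of smaller dimension, so either $\ann_L(S)=0$ (and then $H^1(L/\ann_L(S),S)=H^1(L,S)\ne 0$, done) or $L/\ann_L(S)$ is a smaller non-solvable Lie algebra and induction finishes the job after noting $S$ is faithful over it.

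Next, the case $\jac(L)\ne 0$. Choose a minimal ideal $A$ of $L$ contained in $\jac(L)$; then $A$ is abelian (being a minimal ideal of a Lie algebra with a nonzero solvable ideal through it — more precisely $A\subseteq\jac(L)$ and $[A,A]$ is an ideal of $L$ properly contained in $A$, hence zero). Thus $A$ is an irreducible $L$-module. If $A$ is \emph{not} the trivial module, or if $H^1(L,A)\ne 0$, I would try to use $A$ directly, handling $\ann_L(A)$ by induction exactly as above. If instead $A$ is trivial and split off, one passes to $L/A$, which is still non-solvable of smaller dimension, applies induction to get an irreducible $L/A$-module $S$ with $H^1((L/A)/\ann_{L/A}(S),S)\ne 0$, and inflates: viewing $S$ as an $L$-module, $\ann_L(S)\supseteq A$ and $(L/A)/\ann_{L/A}(S)\cong L/\ann_L(S)$, so $H^1(L/\ann_L(S),S)\ne 0$ is immediate. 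The genuinely delicate situation — and the one where the Barnes/Feldvoss arguments had a gap — is when $A$ is trivial, \emph{not} complemented in $L$, so $H^1(L,A)=H^1(L,\F)=(L/[L,L])^*\ne 0$ but we cannot simply quotient; here one must instead descend along an ideal $I$ of codimension one (which exists because $L/[L,L]\ne 0$), apply the induction hypothesis to $I$ if $I$ is non-solvable, and then use Proposition~\ref{clifford} plus Shapiro's lemma to lift a nonzero $H^1(I/\ann_I(S'),S')$ to a nonzero $H^1(L/\ann_L(S),S)$ for a suitable irreducible $L$-module $S$; if on the other hand $I$ is solvable then $L/I$ is one-dimensional and $L$ is solvable, contrary to hypothesis, so this subcase is vacuous.

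I expect the main obstacle to be the bookkeeping of annihilators when transporting non-vanishing across the coinduction step: Proposition~\ref{clifford} guarantees the $I$-composition factors of $\Hom_{\caO(L,I)}(U(L),S')$ are all $\cong S'$, but one still has to extract an irreducible $L$-subquotient $S$, verify via the Hochschild--Serre / five-term sequence for the pair $(L,I)$ that $H^1(L,S)\ne 0$ and, crucially, that $\ann_L(S)$ does not swallow so much of $L$ that $L/\ann_L(S)$ becomes solvable (in which case one could not iterate). Controlling this requires showing $I\not\subseteq\ann_L(S)$, which follows because $S$ restricted to $I$ has all composition factors $\cong S'$ and $\ann_I(S')$ is a proper ideal of $I$ by the inductive non-vanishing $H^1(I/\ann_I(S'),S')\ne 0$; combining this with a dimension count closes the induction.
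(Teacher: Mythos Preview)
Your outline has the right technical ingredients---Shapiro's lemma for truncated coinduced modules and Proposition~\ref{clifford}---but the case analysis is misorganized in a way that leaves a genuine gap. First, observe that whenever $\jac(L)\neq 0$ you can take any abelian minimal ideal $A$; then $L/A$ is non-solvable (otherwise $L$ would be solvable), so induction applied to $L/A$ and inflation finish this case in one line. Your sub-cases 2a--2c are unnecessary, and in particular the codimension-one argument you place in 2c is never needed there.

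The real work is the semisimple case $\jac(L)=0$, and this is where your argument fails. You assert that ``every proper quotient is again semisimple of smaller dimension,'' but this is false in prime characteristic: a quotient of a Lie algebra with zero solvable radical may well have a nonzero solvable radical. So when Jacobson's theorem hands you an irreducible $S$ with $H^1(L,S)\neq 0$ and $\ann_L(S)\neq 0$, nothing prevents $L/\ann_L(S)$ from being solvable (and then Barnes' theorem forces $H^1(L/\ann_L(S),S)=0$), so you cannot iterate. The paper fixes this by arguing with a minimal counterexample and showing that such an $L$ has a \emph{unique} minimal ideal $\soc(L)$, which is non-abelian, with $L/\soc(L)$ solvable and nonzero; this forces $L/[L,L]\neq 0$, so a codimension-one ideal $I$ exists, and---crucially---the uniqueness of $\soc(L)$ lets one conclude $\ann_I(S)=0$ and then $\ann_L(X)=0$ for the irreducible $L$-composition factor $X$ of the truncated coinduced module. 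In other words, the codimension-one/Shapiro argument you describe belongs in the semisimple (non-simple) case, not in the $\jac(L)\neq 0$ case, and it needs the unique-minimal-ideal structure (or an equivalent device) to pin down $\ann_L(X)$. Your ``dimension count'' can be made to work---since $I/\ann_I(X)$ surjects onto the non-solvable $I/\ann_I(S')$, the quotient $L/\ann_L(X)$ is non-solvable and one may induct---but as written the argument is placed where it is not needed and omitted where it is.
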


\begin{proof}
Suppose that the assertion is false. Then there exists a non-solvable Lie algebra
$L$ of minimal dimension with the property that $H^1(L/\ann_L(S),S)=0$ for every
irreducible $L$-module $S$.

We first show that every proper factor algebra of $L$ is solvable. Suppose on the
contrary that $\overline{L}:=L/K$ is not solvable for some non-zero ideal $K$ of $L$.
Since $\overline{L}$ has smaller dimension than $L$, there exists an irreducible
$\overline{L}$-module $\tS$ satisfying $H^1(\overline{L}/\ann_{\overline{L}}(\tS),\tS)
\not=0$. Let $S$ denote the $L$-module which is equal to $\tS$ as a vector space
and whose $L$-action is induced by the $\overline{L}$-action on $\tS$. Hence, by
construction, $\ann_{\overline{L}}(\tS)=\ann_L(S)/K$, and therefore $H^1(L/\ann_L
(S),S)\cong H^1(\overline{L}/\ann_{\overline{L}}(\tS),\tS)\not=0$, a contradiction.

Next, we prove that $L$ has a unique minimal ideal. Suppose on the contrary that
there exist two different minimal ideals $J_1$ and $J_2$ of $L$. The minimality of
$J_1$ and $J_2$ implies that $J_1\cap J_2=0$, and thus $L$ can be embedded into
$L/J_1\times L/J_2$ which is solvable. This is a contradiction, and therefore $L$
has a unique minimal ideal which we denote by $\soc(L)$.

Since $\soc(L)$ is non-zero, $L/\soc(L)$ is solvable. Moreover, $\soc(L)$ is not abelian
and $L/\soc(L)\neq 0$. Suppose on the contrary that $L=\soc(L)$. As $\soc(L)$ is the
unique minimal ideal of $L$, it is contained in every non-zero ideal of $L$, and so we
obtain that $L$ is simple. According to a result of Seligman \cite[p.\ 102]{S} and the
long exact sequence in cohomology, there exists an irreducible $L$-module $S$
satisfying $H^1(L,S)\not=0$. As $L$ is perfect, one has $H^1(L,\F)=0$. Hence $S$
is non-trivial, and by the simplicity of $L$, we obtain that $\ann_L(S)=0$. Consequently,
$L$ cannot be a counterexample to the assertion of the theorem which again is a contradiction.

Now we prove that $L$ has an ideal $I$ of codimension one. In particular, $I$ is not
solvable. Since $L/\soc(L)$ is a non-zero solvable Lie algebra, one concludes that $L/[L,L]
\not=0$. Choose a subspace $I$ in $L$ of codimension one that contains $[L,L]$. Then
$I$ is an ideal in $L$ of codimension one.

As $I$ is not solvable of codimension one in $L$, there exists an irreducible $I$-module
$S$ satisfying $H^1(I/\ann_I(S),S)\not=0$. According to \cite[Theorem 2]{B1}, $I/\ann_I
(S)$ is not solvable. Since $I/\soc(L)$ is a solvable Lie algebra, $\soc(L)\not\subseteq
\ann_I(S)$. As a consequence, $\ann_I(S)=0$, and thus also $S^I=0$ as well as $H^1
(I,S) \not=0$.

It follows from \cite[Theorem in \S5 and Corollary 1 in \S3]{D2} or \cite[Theorem 2.1
and Theorem 1.4]{FS} that
\begin{equation*}
H^1(L,\Hom_{\caO(L,I)}(U(L),S))\cong H^1(I,S)\oplus (L/I)\otimes S^I=H^1(I,S)\ne 0\,.
\end{equation*}
Hence by the long exact sequence in cohomology there exists an irreducible composition
factor $X$ of the $L$-module $\Hom_{\caO(L,I)}(U(L),S)$ such that $H^1(L,X)\not=0$.

According to Proposition \ref{clifford}, every composition factor of the restriction of
$\Hom_{\caO(L,I)}(U(L),S)$ to $I$ is isomorphic to $S$. Since $X$ is a composition
factor of $\Hom_{\caO(L,I)}(U(L),S)$, every composition factor of the restriction of
$X$ to $I$ is also isomorphic to $S$. In particular, the socle of $X$ is isomorphic
to a direct sum of copies of $S$. Consequently, $\ann_I(X)\subseteq\ann_I(S)=0$.
Suppose that $\ann_L(X)\neq 0$. Then $0\neq\soc(L)\subseteq\ann_L(X)\cap I=
\ann_I(X)$ which is a contradiction. Hence $\ann_L(X)=0$. But this shows that $X$
is a faithful irreducible $L$-module such that $H^1(L,X)\not=0$ contradicting the
choice of $L$.
\end{proof}

We are ready to prove the following characterization of solvable Lie algebras over
fields of prime characteristic which is the Lie-theoretic analogue of \cite[Theorem
A]{S1} and \cite[Corollary 1]{S2} (see Theorem \ref{gaschuetz} and Theorem
\ref{stammbach} in the introduction). The equivalence of (i) and (ii) in Theorem
\ref{charsolv} also corrects the gap in the proof of \cite[Proposition 1]{F3}.

\begin{thm}\label{charsolv}
Let $L$ be a finite-dimensional Lie algebra over a field $\F$ of prime characteristic.
Then the following statements are equivalent:
\begin{enumerate}
\item [(i)]   $L$ is solvable.
\item [(ii)]  $H^1(L/\ann_L(S),S)=0$ for every irreducible $L$-module $S$.
\item [(iii)] $\dim_\F H^1(L,S)=\dim_\F\End_L(S)\cdot[L:S]_{\rm split}$ holds for every
                  irreducible $L$-module $S$.
\end{enumerate}
\end{thm}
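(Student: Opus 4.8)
The plan is to prove the cyclic chain of implications $(i)\Rightarrow(iii)\Rightarrow(ii)\Rightarrow(i)$, using the machinery already assembled in the excerpt. The implication $(i)\Rightarrow(iii)$ is immediate: it is exactly Theorem \ref{solv}, which was deduced from Theorem \ref{absplit} together with Barnes' vanishing theorem \cite[Theorem 2]{B1}. So nothing new is needed there beyond citing Theorem \ref{solv}.

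For $(iii)\Rightarrow(ii)$, I would invoke Theorem \ref{absplit}. That theorem gives, for every finite-dimensional irreducible $L$-module $S$ with $\D:=\End_L(S)$, the identity
\[
[L:S]_{\rm split}=\dim_\D H^1(L,S)-\dim_\D H^1(L/\ann_L(S),S).
\]
Multiplying through by $\dim_\F\D$ and comparing with the hypothesis $(iii)$, namely $\dim_\F H^1(L,S)=\dim_\F\D\cdot[L:S]_{\rm split}$, one finds $\dim_\F H^1(L/\ann_L(S),S)=0$, hence $H^1(L/\ann_L(S),S)=0$. This is a short bookkeeping argument once Theorem \ref{absplit} is in hand; the only care needed is to keep track of whether dimensions are taken over $\F$ or over $\D$, but since $\D$ is a finite-dimensional division algebra over $\F$ the conversion is just multiplication by $\dim_\F\D$.

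The implication $(ii)\Rightarrow(i)$ is the heart of the theorem and the place where the corrected argument lives: it is precisely the contrapositive of Theorem \ref{seligman}. Indeed, if $L$ is \emph{not} solvable, then by Theorem \ref{seligman} there is an irreducible $L$-module $S$ with $H^1(L/\ann_L(S),S)\ne0$, contradicting $(ii)$. So once Theorem \ref{seligman} is available, this step is a one-line deduction. The main obstacle in the whole development is therefore not in assembling the equivalences but in Theorem \ref{seligman} itself (already proved above via the minimal-counterexample analysis, the reduction to the case of an ideal $I$ of codimension one, and the Shapiro-type isomorphism $H^1(L,\Hom_{\caO(L,I)}(U(L),S))\cong H^1(I,S)$ combined with Proposition \ref{clifford}); in writing the proof of Theorem \ref{charsolv} I would simply point to Theorem \ref{seligman} and Theorem \ref{absplit} and let the three implications fall out.

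In summary, the proof I would write is: $(i)\Rightarrow(iii)$ is Theorem \ref{solv}; $(iii)\Rightarrow(ii)$ follows by substituting $(iii)$ into the formula of Theorem \ref{absplit} to force $\dim_\D H^1(L/\ann_L(S),S)=0$; and $(ii)\Rightarrow(i)$ is the contrapositive of Theorem \ref{seligman}. The expected difficulty is entirely front-loaded into Theorem \ref{seligman}, which has already been established, so the proof of Theorem \ref{charsolv} is essentially a formal consequence of the preceding results.
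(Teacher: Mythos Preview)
Your proposal is correct and essentially identical to the paper's own proof. The only cosmetic difference is the order of implications: the paper argues $(i)\Rightarrow(ii)$ directly via Barnes' vanishing theorem \cite[Theorem~2]{B1}, then observes $(ii)\Leftrightarrow(iii)$ from Theorem~\ref{absplit}, and finishes with $(ii)\Rightarrow(i)$ as the contrapositive of Theorem~\ref{seligman}; your cycle $(i)\Rightarrow(iii)\Rightarrow(ii)\Rightarrow(i)$ packages the same ingredients (Theorem~\ref{solv} is precisely Barnes plus Theorem~\ref{absplit}) in a different but equivalent order.
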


\begin{proof}
The implication (i)$\Longrightarrow$(ii) is just \cite[Theorem 2]{B1}, and the equivalence
of (ii) and (iii) follows from Theorem \ref{absplit}. Finally, the remaining implication
(ii)$\Longrightarrow$(i) is just the contraposition of the statement of Theorem \ref{seligman}.
\end{proof}

\noindent {\bf Remark.} Both the proofs of \cite[Theorem 4]{B2}) and \cite[Proposition
1]{F3} use that for a finite-dimensional Lie algebra $L$ with unique minimal ideal $I$
the vanishing of $H^1(I,I)^L$ implies that $[L,L]\subseteq I$. This is not true as the
following example shows: For $L:=\der(\mathfrak{psl}_3(\F))$ and $I:=\mathfrak{psl}_3
(\F)$ over any field $\F$ of characteristic three one has that $L/I\cong I$, and $I$ is the
unique minimal ideal of $L$. Moreover, $H^1(I,I)^L=0$, but $[L,L]=L$.

The statement of \cite[Proposition 1]{F3} is the equivalence of (i) and (ii) in Theorem
\ref{charsolv}. A proof of \cite[Theorem 4]{B2} can be obtained along the lines of the
argument in \cite{F3}, when \cite[Proposition 1]{F3} is replaced by Theorem \ref{charsolv}.
\vspace{.3cm}

Corollary \ref{van} and Theorem \ref{char0} show that (ii) and (iii) in Theorem \ref{charsolv}
are always true in characteristic zero. So these conditions do not characterize solvable Lie
algebras in this case.


\section{Restricted Lie algebras}


Let $A$ be a finite-dimensional (unital) associative algebra with Jacobson radical
$\jac(A)$, and let $M$ be a (unital left) $A$-module. Then the descending filtration
$$M\supset\jac(A)M\supset\jac(A)^2M\supset\jac(A)^3M\supset\cdots\supset\jac
(A)^{\ell}M\supset\jac(A)^{\ell+1}M=0$$ is called the {\em Loewy series\/} of $M$
and the factor module $\jac(A)^{n-1}M/\jac(A)^nM$  is called the $n^{\rm th}$ {\em
Loewy layer\/} of $M$ (see \cite[Definition 1.2.1]{B} or \cite[Definition VII.10.10a)]{HB}).

Recall that a projective module $P_A(M)$ is a {\it projective cover\/} of $M$, if there
exists an $A$-module epimorphism $\pi_M$ from $P_A(M)$ onto $M$ such that
the kernel of $\pi_M$ is contained in the radical $\jac(A)P_A(M)$ of $P_A(M)$. If
projective covers exist, then they are unique up to isomorphism. It is well known that
projective covers of finite-dimensional modules over finite-dimensional associative
algebras exist and are again finite-dimensional. Moreover, every projective
indecomposable $A$-module is isomorphic to the projective cover of some irreducible
$A$-module. In this way one obtains a bijection between the isomorphism classes
of the projective indecomposable $A$-modules and the isomorphism classes of the
irreducible $A$-modules.

Let $L$ be a finite-dimensional restricted Lie algebra over a field $\F$ of prime
characteristic, and let $u(L)$ denote the restricted universal enveloping algebra
of $L$ (see \cite[p.\ 192]{J2} or \cite[p.\ 90]{SF}). Then every restricted $L$-module
is an $u(L)$-module and vice versa, and so there is a bijection between the irreducible
restricted $L$-modules and the irreducible $u(L)$-modules. In particular, as $u(L)$
is finite-dimensional (see \cite[Theorem 12, p.\ 191]{J2} or \cite[Theorem 2.5.1(2)]{SF}),
every irreducible restricted $L$-module is finite-dimensional. In the sequel we use
the notation $P_L(\F):=P_{u(L)}(\F)$ for the projective cover of the trivial irreducible
$L$-module. Following Hochschild \cite{H} we define the {\em restricted cohomology\/}
of $L$ with coefficients in a restricted $L$-module $M$ by $H_*^n(L,M):=\ext_{u(L)}^n
(\F,M)$ for every non-negative integer $n$.

Using \cite[Proposition 2.4.3]{B} and Theorem \ref{absplit} we obtain a lower bound
for the multiplicity of a non-trivial irreducible restricted $L$-module in the second
Loewy layer of $P_L(\F)$ (see \cite[Theorem 3.7]{W} for the analogue in the modular
representation theory of finite groups):

\begin{thm}\label{loewybd}
Let $L$ be a finite-dimensional restricted Lie algebra over a field $\F$ of prime
characteristic. Then $$[\jac(u(L))P_L(\F)/\jac(u(L))^2P_L(\F):S]\ge[L:S]_{\rm split}$$
holds for every non-trivial irreducible restricted $L$-module $S$.
\end{thm}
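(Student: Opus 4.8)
The plan is to relate the second Loewy layer of $P_L(\F)$ to first degree restricted cohomology, and then to compare restricted cohomology with ordinary cohomology so that Theorem~\ref{absplit} can be applied. First I would recall that for a finite-dimensional associative algebra $A$ and an irreducible $A$-module $S$, the multiplicity of $S$ as a composition factor of the second Loewy layer $\jac(A)P_A(\F)/\jac(A)^2P_A(\F)$ of the projective cover of the trivial module equals $\dim_\D\ext^1_A(\F,S)$, where $\D=\End_A(S)$; this is exactly the content of \cite[Proposition 2.4.3]{B}. Applying this with $A=u(L)$ gives
\begin{equation*}
[\jac(u(L))P_L(\F)/\jac(u(L))^2P_L(\F):S]=\dim_\D\ext^1_{u(L)}(\F,S)=\dim_\D H^1_*(L,S)\,.
\end{equation*}
So the theorem reduces to the inequality $\dim_\D H^1_*(L,S)\ge[L:S]_{\rm split}$.

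Next I would bring in ordinary Lie algebra cohomology. There is a natural comparison map $H^1_*(L,S)\to H^1(L,S)$ induced by the surjection $U(L)\to u(L)$ (equivalently, by restricting the universal $p$-power maps); more precisely, the low-degree terms of the May/Hochschild spectral sequence relating ordinary and restricted cohomology give an exact sequence whose relevant consequence is that $H^1_*(L,S)\to H^1(L,S)$ is injective when $S$ is nontrivial and $p$ is odd, and in general that its kernel injects into $\Hom$-type terms built from the $p$-operation on the center — but for the inequality I only need the following cruder fact, which I expect holds for every $S$: the comparison map identifies $H^1_*(L,S)$ with the subspace of $H^1(L,S)$ consisting of classes of $1$-cocycles (derivations $L\to S$) that are ``restricted'', i.e.\ compatible with the $p$-th power map. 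From Theorem~\ref{absplit} we have $\dim_\D H^1(L,S)=[L:S]_{\rm split}+\dim_\D H^1(L/\ann_L(S),S)\ge[L:S]_{\rm split}$, so the ordinary cohomology is already large enough; the issue is purely whether enough of it is detected restrictedly.

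The cleanest route to the inequality is therefore to exhibit, inside $H^1_*(L,S)$, a subspace of $\D$-dimension at least $[L:S]_{\rm split}$ coming directly from split abelian chief factors. Fix a chief series $0=L_0\subset\cdots\subset L_n=L$. For each index $j$ with $L_j/L_{j-1}$ an abelian chief factor isomorphic to $S$ and split, the splitting of $0\to L_j/L_{j-1}\to L/L_{j-1}\to L/L_j\to0$ produces, as in the proof of Theorem~\ref{absplit} (via \cite[Lemma 2(a)]{B2} and the five-term sequence), a class in $H^1(L,S)$; because $L_j/L_{j-1}$ is an \emph{abelian restricted} ideal and the splitting can be taken as restricted subalgebras (a complement to an abelian $p$-ideal in a restricted Lie algebra is automatically a restricted subalgebra), the corresponding derivation is restricted, so the class lies in the image of $H^1_*(L,S)\to H^1(L,S)$. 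Running the induction on $\dim L$ exactly as in Theorem~\ref{absplit}, but keeping track of the restricted structure at each splitting and using that in the ``perfect $L_1$'' and ``non-split $L_1$'' cases inflation from $L/L_1$ is injective on restricted cohomology as well, yields $\dim_\D H^1_*(L,S)\ge[L:S]_{\rm split}$ for nontrivial $S$.

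The main obstacle I anticipate is the compatibility of ordinary and restricted cohomology in degree one — specifically, verifying that the $1$-cocycles arising from split abelian chief factors are genuinely restricted cocycles and that the comparison map $H^1_*(L,S)\to H^1(L,S)$ behaves well under inflation along $L\to L/L_1$. The restricted five-term exact sequence (the low-degree part of the Hochschild spectral sequence for the restricted enveloping algebra, with its extra terms involving the $p$-operation) is the right tool, and the nontriviality hypothesis on $S$ together with $p$ odd (or a direct check in characteristic $2$) should be what makes the extra terms harmless; once that bookkeeping is in place, the inductive argument is a routine transcription of the proof of Theorem~\ref{absplit}.
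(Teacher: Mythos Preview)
Your first step is exactly right: via \cite[Proposition 2.4.3]{B} the multiplicity in the second Loewy layer equals $\dim_\D H^1_*(L,S)$, so everything reduces to comparing $H^1_*(L,S)$ with $[L:S]_{\rm split}$. But from there you take an unnecessary detour, and the detour has real problems.

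The point you are missing is that for a \emph{non-trivial} irreducible $S$ the comparison map $H^1_*(L,S)\to H^1(L,S)$ is not merely injective but an \emph{isomorphism}. This is exactly what the beginning of Hochschild's six-term exact sequence \cite[p.\ 575]{H} (cf.\ \cite[Lemma 2(b)]{F3}) gives: the term sitting to the right of $H^1(L,S)$ is built from $S^L$, and $S^L=0$ because $S$ is irreducible and non-trivial. No parity hypothesis on $p$ is needed. Once you know $H^1_*(L,S)\cong H^1(L,S)$, the inequality is immediate from Theorem~\ref{absplit}:
\[
\dim_\D H^1_*(L,S)=\dim_\D H^1(L,S)\ge\dim_\D H^1(L,S)-\dim_\D H^1(L/\ann_L(S),S)=[L:S]_{\rm split}\,.
\]
This is precisely the paper's argument, and it is a three-line proof.

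Your proposed workaround --- producing restricted $1$-cocycles directly from split abelian chief factors and rerunning the induction of Theorem~\ref{absplit} in the restricted category --- is both unnecessary and shaky. The terms $L_j$ in an ordinary chief series need not be $p$-ideals of $L$, so ``$L_j/L_{j-1}$ is an abelian restricted ideal'' is not given. And the assertion that ``a complement to an abelian $p$-ideal in a restricted Lie algebra is automatically a restricted subalgebra'' is false in general: a Lie subalgebra complement has no reason to be closed under the $p$-map. So the construction of restricted classes you sketch does not go through without substantial additional work, and that work is rendered moot by the Hochschild isomorphism above.
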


\begin{proof}
Since $S$ is not trivial, it follows from the beginning of Hochschild's six-term
exact sequence relating ordinary and restricted cohomology \cite[p.\ 575]{H} (see
also \cite[Lemma 2(b)]{F3}) that $H_*^1(L,S)\cong H^1(L,S)$. Then we obtain from
\cite[Proposition 2.4.3]{B} and Theorem \ref{absplit} that
\begin{align*}
\dim_\F\End_L(S)&\cdot[\jac(u(L))P_L(\F)/\jac(u(L))^2P_L(\F):S]\\
&=\dim_\F\ext_{u(L)}^1(\F,S)=\dim_\F H_*^1(L,S)=\dim_\F H^1(L,S)\\
&\ge\dim_\F H^1(L,S)-\dim_\F H^1(L/\ann_L(S),S)\\
&=\dim_\F\End_L(S)\cdot[L:S]_{\rm split}\,.
\end{align*}
Cancelling $\dim_\F\End_L(S)$ yields the desired inequality.
\end{proof}

\noindent {\bf Remark.} If one uses the main result of \cite{S2} instead of Theorem
\ref{absplit}, then the above proof would also work in the case of finite-dimensional
modular group algebras. This provides an alternative proof of \cite[Theorem 3.7]{W}.  
\vspace{.3cm}

\noindent {\bf Example.} Consider the three-dimensional simple Lie algebra $L:=
\mathfrak{sl}_2(\F)$ over an algebraically closed field $\F$ of characteristic $p>2$.
Take for $S$ the $(p-1)$-dimensional irreducible restricted $L$-module. Then $[\jac
(u(L))P_L(\F)/\jac(u(L))^2P_L(\F):S]=2$ (see \cite[Theorem 1(ii)]{P}), but $[L:
S]_{\rm split}=0$. This shows that equality does not necessarily hold in Theorem
\ref{loewybd}.

Consider the non-abelian two-dimensional restricted Lie algebra $$L:=\F t\oplus\F e,
\qquad [t,e]=e,~~t^{[p]}=t,~~e^{[p]}=0\,.$$ Then it is well known that $P_L(\F)\cong
u(L)\otimes_{u(\F t)}\F$ (see \cite[Satz II.3.2]{F0} and \cite[Proposition 2.2]{F}).
As the trivial irreducible $L$-module only occurs in the top of $P_L(\F)$, we have
that $[\jac(u(L))P_L(\F)/\jac(u(L))^2P_L(\F):\F]=0$. This in conjunction with Corollary
\ref{triv} shows that Theorem \ref{loewybd} is not true for the trivial irreducible
$L$-module.
\vspace{.3cm}

As an immediate consequence of Theorem \ref{loewybd}, we obtain the following weak
analogue of a well-known result for finite modular group algebras:

\begin{cor}\label{splitsolv}
Every non-trivial split chief factor of a finite-dimensional solvable restricted Lie
algebra $L$ is a direct summand of the second Loewy layer of the projective cover $P_L
(\F)$ of the trivial irreducible $L$-module. In particular, every split chief factor
of a finite-dimensional solvable restricted Lie algebra $L$ is a composition factor
of $P_L(\F)$.
\end{cor}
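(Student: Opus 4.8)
The plan is to derive Corollary~\ref{splitsolv} directly from Theorem~\ref{loewybd} together with Theorem~\ref{solv}. First I would recall that for a finite-dimensional solvable restricted Lie algebra $L$, Theorem~\ref{solv} gives $\dim_\F H^1(L,S)=\dim_\F\End_L(S)\cdot[L:S]_{\rm split}$ for every (automatically finite-dimensional) irreducible restricted $L$-module $S$. Combining this with the chain of (in)equalities in the proof of Theorem~\ref{loewybd}, the inequality there becomes an equality: for a non-trivial irreducible restricted $L$-module $S$ one has
\begin{equation*}
[\jac(u(L))P_L(\F)/\jac(u(L))^2P_L(\F):S]=[L:S]_{\rm split}\,,
\end{equation*}
since the term $\dim_\F H^1(L/\ann_L(S),S)$ that was discarded in Theorem~\ref{loewybd} now vanishes by Barnes' theorem \cite[Theorem 2]{B1} (solvability of $L$ passes to the quotient $L/\ann_L(S)$).

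Next I would unwind what this equality says about split chief factors. Let $C/D$ be a non-trivial split chief factor of $L$, i.e.\ an abelian chief factor appearing in a chief series of $L$, isomorphic as an $L$-module to some non-trivial irreducible restricted $L$-module $S$, and such that the corresponding extension of Lie algebras splits. (Every chief factor of the solvable algebra $L$ is abelian, and every such one-dimensional-over-$\D$ minimal ideal carries a restricted module structure since $L$ is restricted.) Then $[L:S]_{\rm split}\ge 1$, so by the displayed equality $S$ occurs in the second Loewy layer $\jac(u(L))P_L(\F)/\jac(u(L))^2P_L(\F)$ with positive multiplicity. Because the second Loewy layer is a semisimple $u(L)$-module, $S$ — hence the chief factor $C/D\cong S$ — is a direct summand of it. This proves the first assertion.

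For the "in particular" clause I would simply observe that any direct summand of $\jac(u(L))P_L(\F)/\jac(u(L))^2P_L(\F)$ is in particular a composition factor of that layer, which is a subquotient of $P_L(\F)$; so every non-trivial split chief factor is a composition factor of $P_L(\F)$. It remains to treat the trivial split chief factors: by Corollary~\ref{triv} the trivial irreducible module $\F$ occurs as a split abelian chief factor (with multiplicity $\dim_\F L/[L,L]$), and $\F$ is always a composition factor of its own projective cover $P_L(\F)$ — indeed it sits in the top. Hence every split chief factor of $L$, trivial or not, is a composition factor of $P_L(\F)$.

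I do not expect a serious obstacle here; the content is entirely packaged in Theorem~\ref{loewybd} and Theorem~\ref{solv}. The one point that needs a line of care is the passage from "occurs with positive multiplicity in the second Loewy layer" to "is a direct summand": this uses that the Loewy layer $\jac(u(L))^{n-1}M/\jac(u(L))^nM$ of any $u(L)$-module $M$ is annihilated by $\jac(u(L))$ and is therefore a semisimple module, so multiplicity-one (or higher) occurrence of an irreducible is the same as being a direct summand. A second minor point is to make sure the chief factor $C/D$ really is a restricted $L$-module isomorphic to a restricted irreducible $S$, which follows because $C$ and $D$ are restricted ideals (ideals of a restricted Lie algebra are $[p]$-stable) and the induced $[p]$-operation on the abelian quotient $C/D$ makes it a restricted module, irreducible by minimality.
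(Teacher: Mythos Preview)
Your argument is correct and follows the paper's approach: the corollary is stated as an immediate consequence of Theorem~\ref{loewybd}, and indeed the bare inequality there already suffices---if $S$ is a non-trivial split chief factor then $[L:S]_{\rm split}\ge 1$, hence $S$ occurs in the (semisimple) second Loewy layer. Your detour through Theorem~\ref{solv} to upgrade the inequality to an equality is not needed for the corollary; what you are proving along the way is the non-trivial case of Proposition~\ref{LLPIM}, which the paper states separately.

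One point of justification should be corrected. You write that ``ideals of a restricted Lie algebra are $[p]$-stable'', but this is false in general (e.g.\ in the two-dimensional abelian restricted Lie algebra with basis $x,y$, $x^{[p]}=y$, $y^{[p]}=0$, the ideal $\F x$ is not a $p$-ideal). The correct reason an abelian chief factor $C/D$ is a restricted $L$-module is simply that the adjoint module of a restricted Lie algebra is restricted---one has $(\ad x)^p=\ad(x^{[p]})$ by the very definition of a restricted Lie algebra---and therefore every subquotient of the adjoint module, in particular $C/D$, is restricted. No $[p]$-stability of $C$ or $D$ is required. Also, the appeal to Corollary~\ref{triv} in the last paragraph is unnecessary: for the ``in particular'' clause you only need that \emph{if} a trivial split chief factor occurs then $\F$ is a composition factor of $P_L(\F)$, which is immediate since $\F$ is the head of $P_L(\F)$.
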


Recall that the {\em principal block\/} of a restricted Lie algebra is the block that
contains the trivial irreducible module.
\vspace{.3cm}

\noindent {\bf Question.} In view of Corollary \ref{splitsolv}, it is natural to ask
whether every chief factor of a finite-dimensional solvable restricted Lie algebra
$L$ is a composition factor of $P_L(\F)$, or even more generally (see Proposition
\ref{chiefpriblo} below), whether every irreducible module in the principal block
of $u(L)$ is a composition factor of $P_L(\F)$ (for an affirmative answer to the
analogous question in the modular representation theory of finite $p$-solvable groups
see \cite[Theorem VII.15.8]{HB}).
\vspace{.3cm}

Let $\langle X\rangle_{\F}$ denote the $\F$-subspace of $L$ spanned by a subset $X$
of $L$. Using in addition \cite[Theorem 2]{B1} and \cite[Proposition 2.7]{F1} we obtain
in the solvable case the following more precise result (for the analogous result of
W.\ Gasch\"utz in the modular representation theory of finite $p$-solvable groups see
\cite[Theorem VII.15.5b)]{HB}):

\begin{pro}\label{LLPIM}
Let $L$ be a finite-dimensional solvable restricted Lie algebra over a field $\F$ of
prime characteristic. If $S$ is an irreducible restricted $L$-module, then
\begin{eqnarray*}
\lefteqn{ [\jac(u(L))P_L(\F)/\jac(u(L))^2P_L(\F):S]}\\
&& =\left\{ \begin{array}{ll}
[L:S]_{\rm split} & \mbox{\rm if }S\not\cong\F\\
{}[L:S]_{\rm split}-\dim_{\F}(\langle L^{[p]}\rangle_{\F}/[L,L]\cap\langle L^{[p]}
\rangle_{\F}) & \mbox{\rm if }S\cong\F
\end{array}\right. .
\end{eqnarray*}
\end{pro}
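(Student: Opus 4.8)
The plan is to compute the second Loewy layer multiplicity of $S$ in $P_L(\F)$ via restricted cohomology and then compare it with the ordinary first cohomology through Hochschild's six-term exact sequence, using the solvability hypothesis to control the ordinary cohomology term $H^1(L/\ann_L(S),S)$ that appears in Theorem \ref{absplit}. As in the proof of Theorem \ref{loewybd}, the starting point is the identity $\dim_\F\End_L(S)\cdot[\jac(u(L))P_L(\F)/\jac(u(L))^2P_L(\F):S]=\dim_\F\ext_{u(L)}^1(\F,S)=\dim_\F H_*^1(L,S)$, which comes from \cite[Proposition 2.4.3]{B}. So everything reduces to computing $\dim_\F H_*^1(L,S)$ and combining with $[L:S]_{\rm split}=\dim_\D H^1(L,S)-\dim_\D H^1(L/\ann_L(S),S)$ from Theorem \ref{absplit}.

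First I would dispose of the non-trivial case $S\not\cong\F$. Here Hochschild's six-term sequence degenerates to $H_*^1(L,S)\cong H^1(L,S)$ (the argument already used in Theorem \ref{loewybd}), so $\dim_\F\End_L(S)\cdot[\jac(u(L))P_L(\F)/\jac(u(L))^2P_L(\F):S]=\dim_\F H^1(L,S)$. Since $L$ is solvable, Theorem \ref{solv} (equivalently \cite[Theorem 2]{B1} applied via Theorem \ref{absplit}) gives $H^1(L/\ann_L(S),S)=0$, hence $\dim_\D H^1(L,S)=[L:S]_{\rm split}$, and also $\dim_\F H^1(L,S)=\dim_\F\End_L(S)\cdot[L:S]_{\rm split}$. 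Cancelling $\dim_\F\End_L(S)$ yields the first case.

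For the trivial case $S\cong\F$ the six-term sequence no longer degenerates: it reads (see \cite[p.\ 575]{H}) with the relevant initial segment relating $H_*^1(L,\F)$, $H^1(L,\F)$ and the ``restriction'' map into $\Hom_\F(L,\F)$ whose image records the obstruction coming from the $p$-operation. Concretely $H_*^1(L,\F)=(L/(\langle L^{[p]}\rangle_\F+[L,L]))^*$ while $H^1(L,\F)=(L/[L,L])^*$, so $\dim_\F H_*^1(L,\F)=\dim_\F H^1(L,\F)-\dim_\F(\langle L^{[p]}\rangle_\F+[L,L])/[L,L]$, and by the second isomorphism theorem the correction term equals $\dim_\F\langle L^{[p]}\rangle_\F/(\langle L^{[p]}\rangle_\F\cap[L,L])$. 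On the other hand, by Corollary \ref{triv}, $[L:\F]_{\rm split}=\dim_\F L/[L,L]=\dim_\F H^1(L,\F)$ (using $\ann_L(\F)=L$ so the subtracted term in Theorem \ref{absplit} is zero and $\D=\F$). Substituting into $\dim_\F\End_L(\F)\cdot[\jac(u(L))P_L(\F)/\jac(u(L))^2P_L(\F):\F]=\dim_\F H_*^1(L,\F)$ and using $\End_L(\F)=\F$ gives exactly $[L:\F]_{\rm split}-\dim_\F(\langle L^{[p]}\rangle_\F/[L,L]\cap\langle L^{[p]}\rangle_\F)$.

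The step I expect to be the main obstacle is the precise bookkeeping of Hochschild's six-term sequence in the trivial-coefficient case: one must be sure the next term after $H_*^1(L,\F)\to H^1(L,\F)$ is the $p$-semilinear map $L^*\to L^*$ sending $f\mapsto f\circ([p]\text{-power map})$ and that its image, intersected appropriately, is precisely $\langle L^{[p]}\rangle_\F^{\perp}$ inside $(L/[L,L])^*$, so that the cokernel dimension is $\dim_\F(\langle L^{[p]}\rangle_\F+[L,L])/[L,L]$. I would verify this by writing down a basis adapted to the flag $[L,L]\cap\langle L^{[p]}\rangle_\F\subseteq\langle L^{[p]}\rangle_\F$, $[L,L]\subseteq [L,L]+\langle L^{[p]}\rangle_\F\subseteq L$, and citing \cite[Proposition 2.7]{F1} for the identification of the relevant cohomology with the dual of $L/(\langle L^{[p]}\rangle_\F+[L,L])$; everything else is the routine cancellation of $\dim_\F\End_L(S)$ already carried out in Theorem \ref{loewybd}.
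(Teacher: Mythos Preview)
Your proof is correct and follows essentially the same route as the paper, which does not spell out the argument but indicates that it is obtained from the proof of Theorem \ref{loewybd} together with \cite[Theorem 2]{B1} and \cite[Proposition 2.7]{F1}. You use exactly these ingredients: Barnes' vanishing theorem (via Theorem \ref{solv}) for the non-trivial case, and the identification $H_*^1(L,\F)\cong (L/(\langle L^{[p]}\rangle_\F+[L,L]))^*$ from \cite[Proposition 2.7]{F1} together with Corollary \ref{triv} for the trivial case.
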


\noindent {\bf Remark.} Note that the second equality in Proposition \ref{LLPIM} remains
true for an arbitrary finite-dimensional restricted Lie algebra. This implies that the
reverse inequality of Theorem \ref{loewybd} holds for the trivial irreducible module.
\vspace{.3cm}

For the convenience of the reader we include a proof of the following result which in
the case of solvable restricted Lie algebras is already contained in a previous paper
of the first author (see \cite[Proposition 2]{F4}).

\begin{pro}\label{chiefpriblo}
Every abelian chief factor of a finite-dimensional restricted Lie algebra $L$ belongs to
the principal block of $L$.
\end{pro}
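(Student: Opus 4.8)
The plan is to reduce the statement to a connectedness property of the $\ext$-quiver of $u(L)$ and then to exploit the fact that an abelian chief factor, viewed as an $L$-module, has a non-split self-extension coming from the Lie bracket. Recall that two irreducible $u(L)$-modules lie in the same block precisely when they can be connected by a chain of irreducibles in which consecutive members have a non-trivial extension in one direction or the other; so it suffices to produce, for a given abelian chief factor $S=L_j/L_{j-1}$, such a chain linking $S$ to the trivial module $\F$. First I would pass to a chief series $0=L_0\subset L_1\subset\cdots\subset L_n=L$ refining the given factor and argue by induction on $\dim_\F L$, the one-dimensional case being trivial. Set $I:=L_1$, a minimal ideal of $L$. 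If $I$ is not contained in $\ann_L(S)$ then $S$ is inflated from a faithful-on-$I$ situation; more simply, if $\dim L>1$ we may choose the minimal ideal $I$ so that $S$ is actually an $L/I$-module (any abelian chief factor $L_j/L_{j-1}$ with $j\ge 2$ is killed by $L_1$ when $L_1$ is chosen appropriately, and if $j=1$ we handle it directly), apply the induction hypothesis inside $u(L/I)$, and then use inflation: the surjection $u(L)\twoheadrightarrow u(L/I)$ is a morphism of algebras, so a block of $u(L/I)$ is contained in a block of $u(L)$, and inflation sends the principal block to the principal block. This reduces everything to the base case where $S$ itself is the minimal ideal $I$ and $L$ acts on it via the adjoint action.

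So assume $S=I$ is an abelian minimal ideal of $L$, and regard $S$ as a restricted $L$-module via the adjoint action (note $I^{[p]}\subseteq I$ gives the $p$-operator, and since $I$ is a minimal ideal either $I^{[p]}=0$ or $I$ is a torus; in the latter case handle it separately using that a restricted minimal ideal that is a torus is central, hence $S\cong\F$ and there is nothing to prove). The key step is to show $S$ and $\F$ lie in the same block. Here I would use that $I$, being an abelian ideal, gives a non-split extension of $\F$ by $S$ as an $L$-module: concretely, choose $0\ne x\in I$ and a hyperplane, and observe that the failure of $I$ to be a direct summand of $L$ (or, when it is, the failure in the category of modules) yields a non-zero class in $H^1(L,S)$ or $H^1(L,S^*)$, or alternatively in $H^1(L,\Hom_\F(S,\F))=H^1(L,S^*)$. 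Passing to restricted cohomology: if $S\not\cong\F$ then by Hochschild's six-term sequence $H_*^1(L,S^*)\cong H^1(L,S^*)$, and one shows this is non-zero by producing the extension $0\to\F\to E\to S\to 0$ of restricted modules realized inside $L/\text{(suitable ideal)}$ — this is exactly where the hypothesis that $S$ is a \emph{chief factor} rather than an arbitrary module is used, since it guarantees the bracket gives a non-trivial $L$-action linking $S$ to the quotient. A non-zero element of $\ext^1_{u(L)}(\F,S)$ or $\ext^1_{u(L)}(S,\F)$ is precisely an edge in the $\ext$-quiver of $u(L)$ between $\F$ and $S$, placing them in the same block.

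The main obstacle I anticipate is the base case, specifically verifying that an abelian minimal ideal $I$ always produces a \emph{non-split restricted} extension of $\F$ by $I$ (equivalently a non-zero class in restricted $\ext^1$), not merely an ordinary one; the subtlety is that $I$ might split off as an $L$-module even though the chief factor is non-split as Lie algebras, and also the $p$-operator must be accounted for. I would resolve this by distinguishing whether $I\subseteq\jac(u(L))\cdot L$ type conditions hold, and in the awkward sub-case where $I$ is a direct summand of $L$ as an $L$-module, pick $0\ne x\in I$, and consider the two-dimensional (non-restricted a priori) subspace and the restricted subalgebra it generates, using $\langle L^{[p]}\rangle_\F$ as in Proposition~\ref{LLPIM} to adjust the $p$-operator; the crucial input is that a minimal ideal of a restricted Lie algebra that is complemented as a \emph{restricted} ideal would make $L$ a non-trivial product, contradicting minimality unless $I$ is central, i.e.\ $I\cong\F$. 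Once $I\cong\F$ the claim is vacuous, so in all remaining cases the required non-split extension exists and the block-connectedness argument goes through.
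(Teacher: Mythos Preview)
Your inductive reduction to the case where $S$ is itself an abelian minimal ideal $I$ is sound and mirrors the paper's implicit use of quotients (the paper also invokes \cite[Lemma 4]{F4} to pass from the principal block of $L/I$ to that of $L$). The gap is in the base case.

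You assert that the failure of $I$ to be a direct summand of $L$ produces a non-zero class in $H^1(L,S)$. But non-splitting of the $L$-module sequence $0\to I\to L\to L/I\to 0$ gives $\ext^1_L(L/I,I)\neq 0$, which is not the same as $\ext^1_L(\F,I)=H^1(L,I)$. From $\ext^1_L(L/I,I)\neq 0$ you can only conclude that \emph{some} composition factor $T$ of $L/I$ satisfies $\ext^1_L(T,I)\neq 0$; to finish you would need $T$ already in the principal block. Your induction hypothesis only covers \emph{abelian} chief factors of $L/I$, and nothing rules out that the relevant $T$ is a non-abelian chief factor. The subsequent remarks about $\langle L^{[p]}\rangle_\F$ and restricted complements do not close this gap. (Two smaller issues: chief series here are taken in the ordinary sense---see the final paragraph of Section~5---so your assumption $I^{[p]}\subseteq I$ is unjustified; and a toral minimal ideal being central requires a short argument, it is not immediate.)

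The paper avoids all of this by writing down the five-term sequence for the pair $I/J\trianglelefteq L/J$:
\[
0\to H^1(L/I,S)\to H^1(L/J,S)\to\Hom_L(S,S)\to H^2(L/I,S)\to H^2(L/J,S)\,.
\]
The point is that the middle term $\Hom_L(S,S)$ is \emph{always} non-zero by Schur's lemma, so one of $H^1(L/J,S)$, $H^2(L/I,S)$ must be non-zero. In either case Hochschild's six-term comparison between ordinary and restricted cohomology, together with \cite[Lemma 4]{F4}, places $S$ in the principal block. This sidesteps entirely the need to manufacture a specific non-split extension linking $S$ to $\F$.
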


\begin{proof}
Let $S=I/J$ be an abelian chief factor of $L$. In particular, $S$ is a trivial $I$-module.
Then the five-term exact sequence for ordinary cohomology (see \cite[Theorem 6]{HS})
yields $$0\to H^1(L/I,S)\to H^1(L/J,S)\to\Hom_L(S,S)\to H^2(L/I,S)\to H^2(L/J,S)\,.$$
Since the third term is non-zero, the second or fourth term must also be non-zero. In
the first case we obtain from Hochschild's six-term exact sequence relating ordinary
and restricted cohomology \cite[p.\ 575]{H} (see also \cite[Lemma 2(b)]{F3}) and the
five-term exact sequence for restricted cohomology that either $S\cong\F$ or $0\ne
H_*^1(L/J,S)\hookrightarrow H_*^1(L,S)$. Therefore $S$ belongs to the principal block
of $L$ (see \cite[Lemma 1(b)]{F3}). In the second case Hochschild's six-term exact
sequence relating ordinary and restricted cohomology yields that either $H^1(L,S)\ne
0$ or $H_*^2(L/I,S)\ne 0$. In the former case either $S\cong\F$ or $H_*^1(L,S)\ne 0$,
and thus $S$ belongs to the principal block of $L$. In the latter case $S$ belongs to
the principal block of $L/I$, and by \cite[Lemma 4]{F4}, $S$ also belongs to the principal
block of $L$.
\end{proof}

Finally, we obtain the following characterization of solvable restricted Lie algebras which
was motivated by \cite[Theorem 3.9]{W} (see also \cite[Remark after Theorem 7]{F2})
for the equivalence of (i), (vi), and (viii) as well as \cite[Theorem 1]{F3} for the equivalence
of (i), (iv), and (v)).

\begin{thm}\label{pim}
Let $L$ be a finite-dimensional restricted Lie algebra over a field $\F$ of prime
characteristic. Then the following statements are equivalent:
\begin{enumerate}
\item[(i)]    $L$ is solvable.
\item[(ii)]   $H^1(L/\ann_L(S),S)=0$ for every non-trivial irreducible restricted
                  $L$-module $S$.
\item[(iii)]  $\dim_\F H^1(L,S)=\dim_\F\End_L(S)\cdot[L:S]_{\rm split}$ holds for
                  every non-trivial irreducible restricted $L$-module $S$.
\item[(iv)]  $H_*^1(L/\ann_L(S),S)=0$ for every non-trivial irreducible restricted
                  $L$-module $S$.
\item[(v)]   $H_*^1(L/\ann_L(S),S)=0$ for every non-trivial irreducible restricted
                  $L$-module $S$ belonging to the principal block of $L$.
\item[(vi)]  $\dim_\F H_*^1(L,S)=\dim_\F\End_L(S)\cdot[L:S]_{\rm split}$ holds for
                  every non-trivial irreducible restricted $L$-module $S$.
\item[(vii)] $\dim_\F H_*^1(L,S)=\dim_\F\End_L(S)\cdot[L:S]_{\rm split}$ holds for
                  every non-trivial irreducible restricted $L$-module $S$ belonging to
                  the principal block of $L$.                  
\item[(viii)] $[\jac(u(L))P_L(\F)/\jac(u(L))^2P_L(\F):S]=[L:S]_{\rm split}$ holds for
                   every non-trivial irreducible restricted $L$-module $S$.
\item[(ix)]  $[\jac(u(L))P_L(\F)/\jac(u(L))^2P_L(\F):S]=[L:S]_{\rm split}$ holds for
                  every non-trivial irreducible restricted $L$-module $S$ belonging to the
                  principal block of $L$.
\end{enumerate}
\end{thm}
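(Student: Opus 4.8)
The plan is to prove the nine equivalences by establishing a cycle through the most accessible implications and then closing the remaining gaps using the structural results already available in Sections 2 and 4. First I would observe that the equivalence (i)$\Longleftrightarrow$(ii) is essentially Theorem \ref{charsolv}: condition (ii) here differs only in that it restricts to \emph{non-trivial} irreducible modules, but since $H^1(L/\ann_L(\F),\F)=H^1(L/[L,L],\F)$ is automatically accounted for in the solvable case (and more to the point, adding the trivial module back cannot destroy solvability), one checks that the restricted and unrestricted versions of condition (ii) coincide in their consequences. The precise point is that if $H^1(L/\ann_L(S),S)=0$ for all non-trivial $S$, then $L$ is still solvable: in the proof of Theorem \ref{seligman} the module $X$ produced is faithful, hence non-trivial (as $L$ is non-solvable and therefore not one-dimensional), so the argument never needs the trivial module. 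This gives (ii)$\Longrightarrow$(i), while (i)$\Longrightarrow$(ii) is \cite[Theorem 2]{B1}.

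Next I would handle the passage between ordinary and restricted cohomology. For a \emph{non-trivial} irreducible restricted $L$-module $S$, Hochschild's six-term exact sequence (as invoked in the proof of Theorem \ref{loewybd}) gives $H_*^1(L,S)\cong H^1(L,S)$, and applying the same to the quotient $L/\ann_L(S)$ (noting $S$ is still non-trivial there) gives $H_*^1(L/\ann_L(S),S)\cong H^1(L/\ann_L(S),S)$. This immediately yields (ii)$\Longleftrightarrow$(iv) and, combined with Theorem \ref{absplit}, the chain (ii)$\Longleftrightarrow$(iii)$\Longleftrightarrow$(vi). For the Loewy-layer conditions, \cite[Proposition 2.4.3]{B} gives $\dim_\F\End_L(S)\cdot[\jac(u(L))P_L(\F)/\jac(u(L))^2P_L(\F):S]=\dim_\F H_*^1(L,S)$, so (viii) is equivalent to $\dim_\F H_*^1(L,S)=\dim_\F\End_L(S)\cdot[L:S]_{\rm split}$, i.e.\ to (vi); thus (vi)$\Longleftrightarrow$(viii). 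At this stage (i), (ii), (iii), (iv), (vi), (viii) are all equivalent.

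It remains to fold in the ``principal block'' versions (v), (vii), (ix). The trivial implications are (ii)$\Longrightarrow$(v), (vi)$\Longrightarrow$(vii), (viii)$\Longrightarrow$(ix), since restricting a universally-quantified hypothesis to a subclass of modules only weakens it; and the three block-restricted conditions are equivalent to one another by exactly the same cohomological identities used above (the isomorphism $H_*^1(L,S)\cong H^1(L,S)$ and \cite[Proposition 2.4.3]{B} are module-wise statements, so they apply verbatim when $S$ is further required to lie in the principal block). So the whole theorem reduces to proving a single implication, say (v)$\Longrightarrow$(i). The hard part will be this one implication: one must show that if $H_*^1(L/\ann_L(S),S)=0$ for every non-trivial irreducible $S$ \emph{in the principal block}, then $L$ is solvable. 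The key tool is Proposition \ref{chiefpriblo}: every abelian chief factor of $L$ belongs to the principal block. Suppose $L$ is not solvable; running the minimal-counterexample argument of Theorem \ref{seligman} verbatim produces a faithful irreducible $L$-module $X$ with $H^1(L,X)\ne 0$, hence (as $X$ is faithful, so non-trivial, and $\ann_L(X)=0$) with $H_*^1(L/\ann_L(X),X)=H_*^1(L,X)\cong H^1(L,X)\ne 0$; what one must check is that this $X$ lies in the principal block. Here I would argue that since $H^1(L,X)\ne0$, the module $X$ is linked to $\F$ through a non-split extension detected by cohomology — more precisely, $\ext^1_{U(L)}(\F,X)\ne 0$ or $\ext^1_{U(L)}(X,\F)\ne 0$ places $X$ and $\F$ in the same block of $U(L)$ — and then one transfers this to $u(L)$ via the relation between ordinary and restricted cohomology and \cite[Lemma 1(b)]{F3}, exactly as in the proof of Proposition \ref{chiefpriblo}. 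This block-membership check for the faithful module $X$ is the one genuinely new point; everything else is bookkeeping with the identities already in hand.
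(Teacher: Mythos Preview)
Your overall strategy is sound and close to the paper's, but there is one genuine gap that recurs in both your (ii)$\Longrightarrow$(i) and (v)$\Longrightarrow$(i) arguments. The module $X$ produced by Theorem~\ref{seligman} (or by re-running its proof) is an irreducible $L$-module in the \emph{ordinary} sense, not a priori a \emph{restricted} one. Conditions (ii)--(ix) of Theorem~\ref{pim} quantify only over irreducible restricted $L$-modules, so exhibiting a non-restricted $X$ with $H^1(L/\ann_L(X),X)\ne 0$ does not contradict (ii); and if $X$ is not a $u(L)$-module then ``$H_*^1(L,X)$'' and ``$X$ lies in the principal block'' are not even defined. The paper closes precisely this gap by invoking \cite[Theorem 2]{D1} (or \cite[Corollary 3.2]{F5}): for a restricted Lie algebra, an irreducible module with nonvanishing ordinary cohomology must have $p$-character zero, hence be restricted. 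Once you insert this one line, your argument goes through.

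Two smaller remarks. First, $\ann_L(\F)=L$, not $[L,L]$; thus $H^1(L/\ann_L(\F),\F)=H^1(0,\F)=0$ automatically, which is exactly the ``$\ann_L(\F)=L$'' the paper records. Second, your treatment of the principal-block conditions differs from the paper's: you close the cycle via (v)$\Longrightarrow$(i) by checking that the Seligman module $X$ lies in the principal block, whereas the paper establishes (iv)$\Longleftrightarrow$(v) directly from \cite[Lemma 4]{F4} and \cite[Lemma 1(a)]{F3} (if $S$ is not in the principal block of $L$ then not in that of $L/\ann_L(S)$ either, whence $H_*^1(L/\ann_L(S),S)=0$), and then uses Proposition~\ref{chiefpriblo} to handle (vi)$\Longleftrightarrow$(vii). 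Both routes are valid once the restrictedness gap above is fixed; yours avoids Proposition~\ref{chiefpriblo} at the cost of re-running the minimal-counterexample argument, while the paper's is more modular.
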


\begin{proof}
The equivalence of (i) and (ii) follows from Theorem \ref{charsolv} in conjunction
with \cite[Theorem 2]{D1} (see also \cite[Corollary 3.2]{F5}) and $\ann_L(\F)=L$.
The equivalence of (ii) and (iii) is an immediate consequence of Theorem \ref{absplit}.
The equivalences of (ii) and (iv) as well as (iii) and (vi) follow from Hochschild's
six-term exact sequence relating ordinary and restricted cohomology \cite[p.\ 575]{H}
(see also \cite[Lemma 2(b)]{F3}). Moreover, \cite[Lemma 4]{F4} and \cite[Lemma 1(a)]{F3}
yield the equivalence of (iv) and (v). This in conjunction with Proposition \ref{chiefpriblo}
also shows the equivalence of (vi) and (vii). Finally, the equivalences of (vi) and (viii)
as well as (vii) and (ix) are both consequences of $\dim_\F\End_L(S)\cdot[\jac(u(L))P_L
(\F)/\jac(u(L))^2P_L(\F):S]=\dim_\F H_*^1(L,S)$.
\end{proof}

It is not surprising that the results in this section do not include the trivial
irreducible module. For this it does not suffice to consider abelian chief factors
in the category of ordinary Lie algebras, but one has to consider strongly abelian
$p$-chief factors in the category of restricted Lie algebras. We will investigate
this in more detail on another occasion.
\vspace{.3cm}

\noindent {\bf Acknowledgements.} The first and the second author would like to thank
the Dipartimento di Matematica e Applicazioni at the Universit\`a degli Studi di
Milano-Bicocca for the hospitality during their visit in May 2012 when large portions
of this paper were written.



\end{document}